\newtheorem {theorem}{Theorem}
\newtheorem {definition}{Definition}
\newtheorem {proposition}{Proposition}
\newtheorem {corollary}{Corollary}
\theoremstyle{remark}
\newtheorem{remark}[equation]{Remark}
\DeclareMathOperator{\vol}{vol}
\DeclareMathOperator{\area}{Area}
\begin{document}
\title{Non-maximality of known extremal metrics on torus and Klein bottle}
\author{Mikhail A. Karpukhin}
\date{} 
\maketitle
\begin{abstract}
El Soufi-Ilias' theorem establishes a connection between minimal submanifolds of spheres and extremal metrics for eigenvalues of the Laplace-Beltrami operator. Recently, this connection was used to provide several explicit examples of extremal metrics. We investigate the maximality of these metrics and prove that all of them are not maximal.\\
\textit{2010 Mathematics Subject Classification.} 58E11, 58J50, 35P15.\\
\textit{Key words and phrases.} Extremal metric, Otsuki tori, Lawson tau-surfaces, bipolar surface. 
\end{abstract}
\section*{Introduction}
Let $M$ be a closed surface and $g$ be a Riemannian metric on $M$. Then the Laplace-Beltrami operator $\Delta$ acts on the space of smooth functions on $M$ by the formula
$$
\Delta f = -\frac{1}{\sqrt{|g|}}\frac{\partial}{\partial x^i}\bigl(\sqrt{|g|}g^{ij}\frac{\partial f}{\partial x^j}\bigr).
$$
It is known that the spectrum of $\Delta$ is discrete and consists only of eigenvalues. Moreover, the multiplicity of any eigenvalue is finite and the sequence of eigenvalues tends to infinity. Let us denote this sequence by 
$$
0 = \lambda_0(M,g) < \lambda_1(M,g) \leqslant \lambda_2(M,g) \leqslant \lambda_3(M,g) \leqslant \ldots,
$$
where the eigenvalues are written with their multiplicities.

For a fixed $M$  the following quantities can be considered as functionals on the space of all Riemannian metrics on $M$,
$$
\Lambda_i(M,g) = \lambda_i(M,g) \area(M,g).
$$
Several recent papers~\cite{EGJ, ElSoufiIlias1, ElSoufiIlias2, Hersh, JNP, Korevaar, LiYau, Nadirashvili1, Nadirashvili2, YangYau} deal with finding supremum of these functionals in the space of all Riemannian metrics on $M$. 

An upper bound for $\Lambda_1(M,g)$ in terms of genus of $M$ was provided in the paper~\cite{YangYau} and the existence of such a bound for $\Lambda_i(M,g)$ was shown in the paper~\cite{Korevaar}.
The exact upper bounds are known for a limited number of functionals: $\Lambda_1(\mathbb{S}^2,g)$ (see~\cite{Hersh}), $\Lambda_1(\mathbb{RP}^2,g)$ (see~\cite{LiYau}), $\Lambda_1(\mathbb{T}^2,g)$ (see~\cite{Nadirashvili1}), $\Lambda_1(\mathbb{K}\mathrm{l},g)$ (see~\cite{EGJ, JNP}), $\Lambda_2(\mathbb{S}^2,g)$ (see~\cite{Nadirashvili2}). We refer to the introduction to the paper~\cite{PenskoiOtsuki} for more details.  

The functional $\Lambda_i(M,g)$ depends continously on $g$ but this functional is not differentiable. However, it is known that for an analytic deformation $g_t$ of the initial metric $g$ there exist the left and right derivatives of $\Lambda_i(M,g_t)$ with respect to $t$, see e.g. the papers~\cite{ElSoufiIlias2,BU,Berger}. This is a motivation for the following definition.

\begin{definition}[see~\cite{ElSoufiIlias1,Nadirashvili1}] A Riemannian metric $g$ on a closed surface $M$ is called an {\em extremal} metric for a functional 
$\Lambda_i(M,g)$ if for any analytic deformation $g_t$ such that $g_0 = g$ the following inequality holds,
$$
\frac{d}{dt}\Lambda_i(M,g_t)\Bigl|_{t=0+} \leqslant 0 \leqslant \frac{d}{dt}\Lambda_i(M,g_t)\Bigl|_{t=0-}.
$$
\end{definition}
\begin{definition}
A metric $g$ is called a {\em maximal} metric for a functional $\Lambda_i(M,g)$ if for any metric $h$ on $M$
$$
\Lambda_i(M,g)\geqslant\Lambda_i(M,h).
$$
\end{definition}
A question whether there exists smooth maximal metric is not trivial. For example there is no smooth maximal metric for $\Lambda_2(\mathbb{S}^2,g)$ (see~\cite{Nadirashvili2}).

The list of known extremal metrics is longer than the list of known exact upper bounds for $\Lambda_i(M,g)$, but until now their maximality has not been studied. In the present paper we investigate the maximality of the known extremal metrics. The list of currently known extremal metrics follows.
\begin{itemize}
\item[(A)] Metrics on the Otsuki tori $O_{p/q}$ were studied in the paper~\cite{PenskoiOtsuki}.
\item[(B)] Metrics on the Lawson tori and Klein bottles $\tau_{m,k}$ were studied in the paper~\cite{PenskoiLawson}.
\item[(C)] Metrics on the surfaces $\tilde\tau_{m,k}$ bipolar to Lawson surfaces were studied in the paper~\cite{Lapointe}.
\item[(D)] Metrics on the bipolar surfaces $\tilde O_{p/q}$ to Otsuki tori were studied in the paper~\cite{Karpukhin}.
\end{itemize}
In further description a Klein bottle is denoted by $\mathbb{K}$.

The definitions of these surfaces are given in the following sections. The main result of the present paper is the following theorem.
\begin{theorem}
There are no maximal metrics among the metrics (A)-(D) except for $\tilde\tau_{3,1}$.
\label{MainTheorem}
\end{theorem}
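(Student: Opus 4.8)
The plan is to show non-maximality by a comparison argument: for each extremal metric in the families (A)–(D), I would exhibit a competitor metric — most naturally the known maximizer (or a near-maximizer) on the same underlying surface — whose normalized eigenvalue $\Lambda_i$ strictly exceeds the value attained by the extremal metric under consideration. Since a maximal metric must dominate every other metric on $M$ by definition, producing a single better competitor suffices to rule out maximality. The El Soufi–Ilias correspondence tells us that each of these extremal metrics arises from a minimal immersion into a sphere by eigenfunctions, so the value $\Lambda_i$ equals (up to normalization) the area of the corresponding minimal submanifold; thus the whole problem reduces to computing and comparing these areas.

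Concretely, I would proceed family by family. For each of the surfaces $O_{p/q}$, $\tau_{m,k}$, $\tilde\tau_{m,k}$, $\tilde O_{p/q}$, the cited papers provide an explicit formula for $\Lambda_i$ in terms of the integer parameters (typically as an integral or a closed expression involving the index $i$ at which the metric is extremal). The first step is to collect these formulas and determine, for each family, the supremum of $\Lambda_i$ over the relevant parameter range. The key observation I expect to exploit is monotonicity in the parameters together with an asymptotic analysis: as the parameters grow, $\Lambda_i$ should approach a limiting value which is itself \emph{not} attained, or which exceeds the value at any finite admissible parameter. In the torus case one can compare against the flat equilateral (Clifford) torus value $\frac{8\pi^2}{\sqrt3}$ from Nadirashvili's theorem, and in the Klein bottle case against the Lawson $\tau_{3,1}$ (bipolar) value which realizes the sharp bound of El Soufi–Giacomini–Jazar; any metric falling strictly below its own surface's sharp bound cannot be maximal.

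The genuinely delicate part is the single exceptional case $\tilde\tau_{3,1}$, which the theorem explicitly exempts: here the comparison must go the other way, so I would need to argue that $\tilde\tau_{3,1}$ actually \emph{does} realize the maximal value for its functional (it is the surface attaining the conjectured sharp Klein bottle bound), and hence cannot be shown non-maximal — indeed it should be flagged as the unique genuinely maximal metric on the list. This asymmetry means the proof is not uniform: most of the work is a chain of strict inequalities ruling out the others, while $\tilde\tau_{3,1}$ requires recognizing it as the extremal Klein bottle metric itself.

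The main obstacle will be the comparison step where the competitor is not a metric on the same surface in an obvious sense — the extremal metrics live on tori or Klein bottles, but the natural sharp bounds (Nadirashvili for $\mathbb{T}^2$, the $\tau_{3,1}$ bound for $\mathbb{K}$) are stated for those topological types, so I must verify that each $O_{p/q}$, $\tilde O_{p/q}$, etc.\ is genuinely a torus or Klein bottle and therefore subject to the same sharp bound. Once the topological identification is settled, the numerical comparison reduces to showing that the explicit $\Lambda_i$ value for each family member lies strictly below the sharp bound for its topological type, which should follow from the asymptotic estimates; establishing those estimates rigorously, rather than merely numerically, is where the real computational care is required.
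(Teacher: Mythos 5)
Your overall framework (exhibit a competitor metric with a strictly larger value of the \emph{same} functional) has the right logical shape, but the concrete competitor you name does not work, and this is the crux of the whole problem. The metrics (A)--(D) are extremal for higher functionals $\Lambda_i$ with $i\geqslant 2$ (e.g.\ $\Lambda_{2p-1}$ for $O_{p/q}$, $\Lambda_{4m-2}$ for most $\tilde\tau_{m,k}$), and their values grow linearly in the parameters: for instance $\Lambda_{2p-1}(O_{p/q})\geqslant 8\pi q\geqslant 24\pi$, which already exceeds Nadirashvili's sharp bound $8\pi^2/\sqrt{3}\approx 45.6$ for $\Lambda_1(\mathbb{T}^2,g)$. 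So the claim that these metrics fall ``strictly below their own surface's sharp bound'' is false if the bound meant is the $\Lambda_1$ bound, and the sharp values of $\sup\Lambda_i(\mathbb{T}^2,g)$ and $\sup\Lambda_i(\mathbb{K},g)$ for $i\geqslant 2$ are not known, so there is no off-the-shelf bound to compare against. The missing idea is a \emph{construction} of a good competitor for each index $i$: the paper proves the lower bounds $\sup\Lambda_i(\mathbb{T}^2,g)\geqslant 8\pi\left(i-1+\pi/\sqrt{3}\right)$ and $\sup\Lambda_i(\mathbb{K},g)\geqslant 8\pi(i-1)+12\pi E\left(2\sqrt{2}/3\right)$ by taking the disjoint union of the equilateral torus (resp.\ of $\tilde\tau_{3,1}$), rescaled so that $\lambda_1=2$, with $i-1$ round spheres having the same first eigenvalue, and then joining the components by thin handles; the Chavel--Feldman theorem guarantees that the spectrum and the area converge, so one obtains genuine tori (resp.\ Klein bottles) whose $\Lambda_i$ is arbitrarily close to that of the disjoint union. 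Without some such construction your comparison cannot even start.

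The second half of the work, which you correctly anticipate, is then a family-by-family chain of explicit inequalities showing that each extremal value lies strictly below the corresponding glued-competitor bound at its own index. For the Otsuki tori this is genuinely delicate and is not a matter of asymptotics in the parameters: the paper needs the monotonicity of $\Omega(a)$ and of $\Phi(a)=\cos a\,E\left(\sqrt{1-\tan^2 a}\right)$, the derivative bound $\Phi'(a)<1/2$, and a mean-value argument, because both the extremal value and the competitor bound grow with $p$ and $q$ at comparable rates. For the Lawson families the crude bound $E(k)\leqslant\pi/2$ settles all but finitely many cases, which are checked by hand. Your remarks on the exceptional case $\tilde\tau_{3,1}$ and on verifying the topological type of each surface are correct but peripheral to the main difficulty.
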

\begin{remark}
The metric on the Lawson bipolar Klein bottle $\tilde\tau_{3,1}$ is maximal for the functional $\Lambda_1(\mathbb{K},g)$, see~\cite{EGJ,JNP}. 
\end{remark}
We also prove the following proposition.
\begin{proposition}
The metric on the Clifford torus is extremal for an infinite number of functionals $\Lambda_i(M,g)$, but it is not maximal for any of them.
\label{CliffordTheorem}
\end{proposition}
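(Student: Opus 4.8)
The plan is to realize the Clifford torus as the flat square torus $\mathbb T^2 = \mathbb R^2/(2\pi\mathbb Z)^2$ with metric $g_0 = dx^2 + dy^2$, whose eigenfunctions are $\cos\langle v,\cdot\rangle,\ \sin\langle v,\cdot\rangle$ with eigenvalues $|v|^2$, $v\in\mathbb Z^2$, and then to produce infinitely many minimal immersions of this fixed metric into round spheres. For each integer $N$ that is a sum of two squares put $V_N=\{v\in\mathbb Z^2:|v|^2=N\}$, and consider the map assembled from $\{\cos\langle v,\cdot\rangle,\sin\langle v,\cdot\rangle\}_{v\in V_N}$ into a sphere. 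First I would verify the two conditions that make this, up to homothety, an isometric minimal immersion: harmonicity of the components, which is immediate since each is an $N$-eigenfunction, and the isotropy relation $\sum_{v\in V_N} v\otimes v \propto \mathrm{Id}$. The key observation is that $V_N$ is invariant under the dihedral symmetry group $D_4$ of the square lattice, and the only $D_4$-invariant symmetric $2\times 2$ matrices are scalar; hence the isotropy relation holds automatically for every admissible $N$. After rescaling, each such map is a minimal isometric immersion into a unit sphere whose induced metric is homothetic to $g_0$.

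By the El Soufi--Ilias theorem~\cite{ElSoufiIlias1} this makes $g_0$ extremal for $\Lambda_{k}(\mathbb T^2,\cdot)$ at every index $k$ with $\lambda_k(g_0)=N$, in particular at the bottom of the block $j(N)=1+\#\{v\in\mathbb Z^2:0<|v|^2<N\}$. Since $j(N)\to\infty$ as $N\to\infty$, the metric is extremal for infinitely many distinct functionals, which proves the first assertion. Because $\Lambda_i$ is scale invariant, its value on the Clifford torus at any index $k$ of the $N$-block equals $\Lambda_k(\mathbb T^2,g_0)=N\cdot\area(\mathbb T^2,g_0)=4\pi^2 N$.

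To prove non-maximality I would compare $4\pi^2N$ with a lower bound for $\Lambda_k^*(\mathbb T^2)=\sup_h\Lambda_k(\mathbb T^2,h)$. Degenerating the torus into $k$ round spheres joined by thin necks and applying Hersch's equality $\Lambda_1(\mathbb S^2)=8\pi$~\cite{Hersh} gives $\Lambda_k^*(\mathbb T^2)\geq 8\pi k$, so the Clifford torus is not maximal at an extremal index $k$ (with $\lambda_k=N$) whenever $8\pi k>4\pi^2 N$, i.e.\ $k>\tfrac{\pi}{2}N$. For $N\geq2$ the smallest extremal index is $j(N)\geq5$, and since $j(N)-1$ counts the nonzero lattice points in the open disc of radius $\sqrt N$, the Gauss circle estimate gives $j(N)=\pi N+O(\sqrt N)>\tfrac{\pi}{2}N$ with a wide margin, so the inequality holds for every index of the block. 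The only case the bubbling bound fails is $N=1$, $k=1$: here $\Lambda_1(\mathbb T^2,g_0)=4\pi^2$ is strictly less than the sharp maximum $\tfrac{8\pi^2}{\sqrt3}$ attained by the equilateral flat torus~\cite{Nadirashvili1} (equivalent to $2>\sqrt3$), while the remaining indices $k\in\{2,3,4\}$ of the $N=1$ block already satisfy $8\pi k\geq 16\pi>4\pi^2$.

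The main obstacle is bookkeeping rather than any single hard estimate: one must pin down the exact index range produced by El Soufi--Ilias, and then confirm the strict inequality $8\pi k>4\pi^2N$ uniformly over all blocks with $N\geq2$, which reduces to a Gauss-circle lattice count supplemented by a finite check for small $N$. The genuinely delicate point is the single first-eigenvalue case $N=1$, $k=1$, where the crude bubbling bound $8\pi$ does not exceed $4\pi^2$ and one is forced to invoke the sharp equilateral-torus value of $\Lambda_1(\mathbb T^2)$.
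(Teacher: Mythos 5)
Your proposal is correct and follows essentially the same route as the paper: the same flat square-torus model with eigenfunction immersions into round spheres (your $D_4$-invariance argument is just a uniform packaging of the paper's three explicit families of immersions), El Soufi--Ilias applied at the bottom index of each eigenvalue block, the Gauss-circle lower bound $j(N)\geqslant\pi\left(\sqrt{N}-\sqrt{2}/2\right)^2$ compared against the glued-spheres bound $\sup\Lambda_k\geqslant 8\pi k$ for $N\geqslant 6$ plus a finite check for small $N$, and the equilateral torus value $8\pi^2/\sqrt{3}>4\pi^2$ for the exceptional case $N=1$, $k=1$. One caveat: the El Soufi--Ilias theorem as used here yields extremality only at the index $N(2)$, i.e.\ at the bottom of each block, not at every index $k$ with $\lambda_k=N$ as you assert; since your non-maximality inequality is verified at the bottom index and only improves higher up in the block, this overstatement does not affect the argument.
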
  
The extremality of the Clifford torus for an infinite number of functionals $\Lambda_i(M,g)$ is known, but to the best of author's knowledge has not yet been published. In the present paper we fill this gap. 

In the following description 
we use the notations $K(k)$, $E(k)$ and $\Pi(n,k)$ for the elliptic integrals of the first, second and third kind respectively, see~\cite{Friedman},
\begin{equation*}
\begin{split}
&K(k) = \int\limits_0^1\frac{1}{\sqrt{1-x^2}\sqrt{1-k^2x^2}}\,dx,\qquad
E(k) = \int\limits_0^1\frac{\sqrt{1-k^2x^2}}{\sqrt{1-x^2}}\,dx, \\
&\Pi(n,k) = \int\limits_0^1\frac{1}{(1-nx^2)\sqrt{1-x^2}\sqrt{1-k^2x^2}}\,dx. 
\end{split}
\end{equation*}

The paper is organized in the following way. In Section~\ref{bound} we prove lower bounds for $\sup\Lambda_n(\mathbb{T}^2,g)$ and $\sup\Lambda_n(\mathbb{K},g)$. These bounds are used throughout the paper in order to prove the non-maximality of metrics (A)-(D). In Section~\ref{Connection} we recall a connection between extremal metrics and minimal submanifolds of the unit sphere. Section~\ref{OtsukiDef} contains a discription of Otsuki tori as an $SO(2)$-invariant minimal submanifolds of $\mathbb{S}^3$ of cohomogeneity 1. Sections~\ref{OtsukiEstimate}, \ref{LawsonEstimate}, \ref{BipLawsonEstimate}, \ref{BipOtsukiEstimate}
are dedicated to estimates for extremal metrics (A)-(D) respectively and this finishes the proof of Theorem~\ref{MainTheorem}. Finally, Section~\ref{CliffordEstimate} contains the proof of Proposition~\ref{CliffordTheorem}.
\section{Lower bounds for $\sup\Lambda_n$}
The aim of this section is to prove the following proposition (compare with Corollary 4 in the paper~\cite{CE}).
\label{bound}
\begin{proposition}
One has the following inequalities,
$$
\sup\Lambda_n(\mathbb{T},g)\geqslant 8\pi\left(n-1+\frac{\pi}{\sqrt{3}}\right),
$$
$$
\sup\Lambda_n(\mathbb{K},g)\geqslant 8\pi(n-1)+12\pi E\left(\frac{2\sqrt{2}}{3}\right),
$$
where $E(k)$ stands for the elliptic integral of the second kind.
\label{LowerBound}
\end{proposition}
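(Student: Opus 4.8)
The plan is to derive both inequalities from a single inductive step: for any closed surface $M$ and any $n\geqslant 2$,
$$\sup\Lambda_n(M)\geqslant\sup\Lambda_{n-1}(M)+8\pi,$$
where $8\pi=\sup\Lambda_1(\mathbb{S}^2)$ is the Hersch bound~\cite{Hersh}. Iterating this inequality $n-1$ times and inserting the known exact first eigenvalues $\sup\Lambda_1(\mathbb{T})=8\pi^2/\sqrt{3}$ (see~\cite{Nadirashvili1}) and $\sup\Lambda_1(\mathbb{K})=12\pi E(2\sqrt{2}/3)$ (see~\cite{EGJ,JNP}) yields exactly the two claimed bounds, since $8\pi^2/\sqrt{3}=8\pi\cdot\pi/\sqrt{3}$. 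Thus the whole proposition reduces to the displayed one-step estimate, which is where all the work lies (compare~\cite{CE}).

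To establish the step I would fix a metric $g$ on $M$ with $\Lambda_{n-1}(M,g)$ arbitrarily close to $\sup\Lambda_{n-1}(M)$, together with a round sphere, and glue them by removing a small geodesic disk from each and joining the two boundary circles by a thin cylindrical neck. Since $M\#\mathbb{S}^2$ is diffeomorphic to $M$, the resulting surface $M_\varepsilon$ is again a torus or Klein bottle and its metric is admissible in the definition of $\sup\Lambda_n(M)$. Before gluing I would rescale the two pieces independently: keep the sphere round with first positive eigenvalue $2$ and area $4\pi$, and scale $g$ so that $\area(M)=a$, whence the positive eigenvalues of $M$ become $\mu_i=\Lambda_i(M,g)/a$. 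The decisive choice is $a=\Lambda_{n-1}(M,g)/2$, which makes the $(n-1)$-st eigenvalue $\mu_{n-1}=2$ of $M$ coincide with the first eigenvalue of the sphere.

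The analytic heart is the behaviour of $\operatorname{spec}(M_\varepsilon)$ as the neck pinches, $\varepsilon\to 0$: the eigenvalues converge to those of the disjoint union $M\sqcup\mathbb{S}^2$, with the single caveat that the two locally constant functions split into the genuine constant and one mode whose eigenvalue is positive but tends to $0$. Counting the eigenvalues below the level $2$ then gives the result: they are exactly the two eigenvalues coming from this splitting together with $\mu_1,\dots,\mu_{n-2}$, so that at most $n$ eigenvalues lie below $2$ and hence $\lambda_n(M_\varepsilon)\geqslant 2-o(1)$, while $\area(M_\varepsilon)\to a+4\pi$. Therefore
$$\Lambda_n(M_\varepsilon)=\lambda_n(M_\varepsilon)\,\area(M_\varepsilon)\geqslant(2-o(1))(a+4\pi)\xrightarrow[\varepsilon\to0]{}\Lambda_{n-1}(M,g)+8\pi,$$
and letting $\Lambda_{n-1}(M,g)\to\sup\Lambda_{n-1}(M)$ completes the step. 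For a lower bound on $\sup\Lambda_n$ only the inequality $\lambda_n(M_\varepsilon)\geqslant 2-o(1)$ is needed, so ties among the $\mu_i$ (for instance $\mu_{n-2}=\mu_{n-1}$) cause no trouble, as they only decrease the count of eigenvalues strictly below $2$.

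The main obstacle is precisely this spectral convergence on a degenerating neck: one must show that collapsing the neck produces no spurious small eigenvalues beyond the single splitting mode, and that the Dirichlet energy lost to the cut-off functions supported near the neck is negligible in the limit. I would handle the required lower bound on $\lambda_n(M_\varepsilon)$ by estimating from above the number of eigenvalues below $2$, controlling the neck contribution through its small area and width, following the gluing technique underlying Corollary 4 of~\cite{CE}.
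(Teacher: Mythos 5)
Your proposal is correct and is essentially the paper's argument: the paper likewise glues round spheres of area $4\pi$ and first eigenvalue $2$ onto a $\Lambda_1$-maximal base surface (the flat equilateral torus, resp.\ $\tilde\tau_{3,1}$, rescaled so that $\lambda_1=2$), the only cosmetic difference being that it forms the disjoint union with all $n-1$ spheres at once and attaches the handles consecutively, rather than inducting one sphere per step via $\sup\Lambda_n\geqslant\sup\Lambda_{n-1}+8\pi$. The neck-degeneration spectral convergence you correctly single out as the main obstacle is exactly what the paper outsources to the Chavel--Feldman theorem on spectra of manifolds with small handles, so your outline matches the paper's proof in substance.
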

\subsection{Attaching handles due to Chavel-Feldman} Let $M$ be a compact smooth Riemannian manifold of dimension $n\geqslant 2$. Let us pick two distinct points $p_1,p_2\in M$. For $\varepsilon > 0$ we define
\begin{itemize}
\item[$B_\varepsilon$] $\colon =$ union of open geodesic balls of radius $\varepsilon$ about $p_1$ and $p_2$,
\item[$\Omega_\varepsilon$] $\colon = M\backslash B_\varepsilon$,
\item[$\Gamma_\varepsilon$] $\colon = \partial B_\varepsilon = \partial \Omega_\varepsilon$.
\end{itemize}
Here the number $\varepsilon$ is chosen to be less than $\frac{1}{4}$ of injectivity radius of $M$ and less than $\frac{1}{4}$ of a distance between $p_1$ and $p_2$ if $p_1$ and $p_2$ lie in the same connected component of $M$. 
We say that manifold $M_\varepsilon$ is obtained from $M$ by adding a handle across $\Gamma_\varepsilon$ if 
\begin{itemize}
\item[1)] $\Omega_\varepsilon$ is isometrically embedded in $M_\varepsilon$;
\item[2)] there exists a diffeomorphism $\Psi_\varepsilon\colon M_\varepsilon\backslash\Omega_{2\varepsilon}\to [-1,1]\times \mathbb{S}^{n-1}$  such that 
$$
M_\varepsilon\backslash\Omega_\varepsilon = \Psi_\varepsilon^{-1}\left(\left[-\frac{1}{2},\frac{1}{2}\right]\times\mathbb{S}^{n-1}\right).
$$
\end{itemize}
Let us denote by $\lambda_j$ and $\lambda_j(\varepsilon)$ the Laplace spectrum of $M$ and $M_\varepsilon$ respectively.
Chavel and Feldman in their paper~\cite{ChFeld} obtained a sufficient condition for convergence $\lambda_j(\varepsilon)\to\lambda_j$ as $\varepsilon$ tends to $0$. In order to formulate this condition we need to give the following definition.
\begin{definition}
For any compact connected Riemannian manifold $X$ of dimension $n\geqslant 2$, the isoperimetric constant $c_1(X)$ is defined by 
$$
c_1(X)=\inf\limits_Y\frac{(\vol_{n-1}(Y))^n}{(\min(\vol_n(X_1),\vol_n(X_2)))^{n-1}},
$$
where $\vol_k$ stands for $k$-dimensional Riemannian measure, and $Y$ ranges over all compact $(n-1)$-dimensional submanifolds of $X$ such that they divide $X$ into $2$ open 
submanifolds $X_1$, $X_2$ each having boundary $Y$.
\end{definition} 
\begin{theorem}[Chavel, Feldman~\cite{ChFeld}] Assume that $M_\varepsilon$ is connected for any $\varepsilon$ and there exists a constant $c>0$ such that $c_1(M_\varepsilon)\geqslant c$ for all $\varepsilon>0$. Then $\lim_{\varepsilon\to 0}\lambda_j(\varepsilon) = \lambda_j$ for all $j=1,2,\ldots$
\label{Ch1}
\end{theorem}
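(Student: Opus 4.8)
The plan is to establish the two one-sided inequalities $\limsup_{\varepsilon\to 0}\lambda_j(\varepsilon)\leqslant\lambda_j$ and $\liminf_{\varepsilon\to 0}\lambda_j(\varepsilon)\geqslant\lambda_j$ separately, both through the variational (min-max) characterization
$$
\lambda_j = \min_{\substack{V\subset H^1 \\ \dim V = j+1}}\ \max_{0\neq u\in V}\frac{\int|\nabla u|^2}{\int u^2}
$$
applied on $M$ and on $M_\varepsilon$. The isoperimetric hypothesis $c_1(M_\varepsilon)\geqslant c$ enters only in the second inequality, which is the genuinely difficult one.

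The upper bound does not use the isoperimetric hypothesis. Fix $L^2$-orthonormal eigenfunctions $\phi_0,\dots,\phi_j$ on $M$ and set $V=\operatorname{span}(\phi_0,\dots,\phi_j)$. Since $n\geqslant 2$, a point has zero capacity, so I can choose logarithmic cutoffs $\chi_\varepsilon$ equal to $0$ on $B_\varepsilon$, equal to $1$ on $\Omega_{2\varepsilon}$, with $\int|\nabla\chi_\varepsilon|^2\to 0$ as $\varepsilon\to 0$. The functions $\chi_\varepsilon\phi_i$ are supported in $\Omega_\varepsilon$, which embeds isometrically into $M_\varepsilon$; extending them by zero across the handle produces a $(j+1)$-dimensional subspace $V_\varepsilon\subset H^1(M_\varepsilon)$. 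Because $\vol_n B_\varepsilon\to 0$, the $\phi_i$ are smooth with bounded gradients, and $\int|\nabla\chi_\varepsilon|^2\to 0$, both the Gram matrix of $\{\chi_\varepsilon\phi_i\}$ and its Dirichlet form converge to those of $\{\phi_i\}$. Hence the maximal Rayleigh quotient over $V_\varepsilon$ tends to $\lambda_j$, and min-max on $M_\varepsilon$ gives $\limsup_{\varepsilon\to 0}\lambda_j(\varepsilon)\leqslant\lambda_j$.

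The lower bound is the main obstacle, and it is exactly here that $c_1(M_\varepsilon)\geqslant c$ is indispensable: without it an eigenfunction on $M_\varepsilon$ could concentrate on the shrinking handle $H_\varepsilon:=M_\varepsilon\setminus\Omega_\varepsilon$ and produce a spurious small eigenvalue. I would first invoke the Federer--Fleming--Maz'ya equivalence to convert the uniform lower bound on $c_1(M_\varepsilon)$ into a uniform Sobolev inequality $\|f\|_{L^{n/(n-1)}(M_\varepsilon)}\leqslant C\|\nabla f\|_{L^1(M_\varepsilon)}$ with $C$ independent of $\varepsilon$, which bootstraps to a uniform $L^2$-Sobolev inequality (for $n=2$, to $L^p$ for every $p<\infty$). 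Let $u_0(\varepsilon),\dots,u_j(\varepsilon)$ be $L^2$-orthonormal eigenfunctions on $M_\varepsilon$; by the upper bound their Dirichlet energies are bounded. Combining the uniform Sobolev inequality with H\"older's inequality and $\vol_n(H_\varepsilon)\to 0$, I would show that the mass on the handle is negligible, $\int_{H_\varepsilon}|u_i(\varepsilon)|^2\to 0$, and that the trace of $u_i(\varepsilon)$ on $\Gamma_\varepsilon$ carries negligible energy.

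Finally, I would transplant the restrictions $u_i(\varepsilon)|_{\Omega_\varepsilon}$ back to $M$ by extending them across the two small balls $B_\varepsilon$ with an extension whose Dirichlet energy is controlled by the boundary trace on $\Gamma_\varepsilon$ (for instance a harmonic extension, or the standard bounded extension operator for the degenerating balls). The preceding estimates guarantee that this extension contributes only $o(1)$ to both the $L^2$ norm and the Dirichlet energy, so the resulting $\tilde u_0,\dots,\tilde u_j\in H^1(M)$ form, after an $o(1)$ Gram--Schmidt correction, an orthonormal family whose Rayleigh quotients are at most $\lambda_j(\varepsilon)+o(1)$. Applying min-max on $M$ to their span yields $\lambda_j\leqslant\lambda_j(\varepsilon)+o(1)$, hence $\lambda_j\leqslant\liminf_{\varepsilon\to 0}\lambda_j(\varepsilon)$, and together with the upper bound this proves $\lambda_j(\varepsilon)\to\lambda_j$. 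The two delicate points are the uniform Sobolev constant (where the hypothesis $c_1\geqslant c$ is essential) and the simultaneous control of the $L^2$ norm and the energy of the extension near the two shrinking balls.
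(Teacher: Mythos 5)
You should first know that the paper contains no proof of this statement at all: Theorem~\ref{Ch1} is quoted verbatim from Chavel--Feldman~\cite{ChFeld} and used as a black box, so your proposal can only be measured against the original argument. Your overall architecture does match that argument in outline: two one-sided min--max inequalities, with the hypothesis $c_1(M_\varepsilon)\geqslant c$ converted into a uniform Sobolev inequality (Federer--Fleming--Maz'ya) and used only for the lower bound. Your upper-bound half is essentially complete and correct: there the test functions are built from eigenfunctions of the \emph{fixed} manifold $M$, whose sup norms are harmless constants, so the capacity (logarithmic) cutoffs do the job.

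The genuine gap is in the lower bound, exactly at the point you flag as ``delicate'' and then assert follows from ``the preceding estimates''. Uniform $L^p$ bounds with $p<\infty$ (which is all that the Sobolev inequality gives directly, in any dimension) are \emph{not} sufficient to control the transplantation error, for either of your two proposed devices. For the cutoff-by-zero route, the error term is $\int|\nabla\chi_\varepsilon|^2u_i(\varepsilon)^2$, and H\"older bounds it by $\|\nabla\chi_\varepsilon\|_{L^{2s}}^2\|u_i(\varepsilon)\|_{L^{2s/(s-1)}}^2$; but for every $s>1$ the $2s$-capacity of a point in a surface is \emph{positive}, so $\|\nabla\chi_\varepsilon\|_{L^{2s}}^2$ is bounded away from zero for any admissible cutoff (only the $s=1$, i.e.\ $L^\infty$, case enjoys vanishing capacity), and optimizing over the cutoff radius $\delta$ gives an error bound of order $\delta^{-4/p}$ --- never $o(1)$. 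For the harmonic/bounded-extension route, the extension energy is controlled (scale-invariantly, via trace plus Poincar\'e on the annulus $B_{2\varepsilon}\setminus B_\varepsilon$) by $\|\nabla u_i(\varepsilon)\|_{L^2(B_{2\varepsilon}\setminus B_\varepsilon)}^2$, which is merely $O(1)$: nothing in Sobolev + H\"older + $\vol(H_\varepsilon)\to0$ prevents the Dirichlet energy of the $\varepsilon$-dependent eigenfunctions from concentrating near $\Gamma_\varepsilon$, so your claim that ``the trace of $u_i(\varepsilon)$ on $\Gamma_\varepsilon$ carries negligible energy'' is unsupported. The missing ingredient --- and the actual content of the Chavel--Feldman argument --- is to upgrade the uniform Sobolev inequality, via Moser iteration (or heat-kernel bounds) on the closed manifolds $M_\varepsilon$, to \emph{uniform sup-norm bounds} $\|u_i(\varepsilon)\|_\infty\leqslant C\bigl(1+\lambda_i(\varepsilon)\bigr)^{n/4}\|u_i(\varepsilon)\|_{L^2}$ with $C$ independent of $\varepsilon$; with these, the cutoff errors are bounded by $\|u_i(\varepsilon)\|_\infty^2\bigl(\int|\nabla\chi_\varepsilon|^2+\vol(\operatorname{supp}(1-\chi_\varepsilon))\bigr)\to0$ and your scheme closes. (Alternatively, one can keep only $L^p$ bounds but choose the cutoff radius by pigeonhole among the $\sim\log(1/\varepsilon)$ dyadic annuli in $B_{\sqrt{\varepsilon}}\setminus B_\varepsilon$, extending across an annulus whose energy is $O(1/\log(1/\varepsilon))$; but some such additional device is indispensable, and your sketch contains none.)
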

\begin{remark}
The assumption in the previous theorem implies $\lim_{\varepsilon\to 0}\vol_n(M_\varepsilon)=\vol_n(M)$ by picking $Y=\Gamma_\varepsilon$.
\end{remark}
In the same paper existence of such $M_\varepsilon$ is verified for any surface $M$ and almost any pair of points $p_1$, $p_2$.
\begin{theorem}
Let $M$ be a compact $2$-dimensional Riemannian manifold, $K$ be its Gaussian curvature and $\tilde M = (M\backslash K^{-1}(0))\cup \mathrm{int}\, K^{-1}(0)$. Then $\tilde M$ is open and dense in $M$. Suppose that $p_1,p_2\in\tilde M$ and one of the following possibilities occur:
\begin{itemize}
\item $M$ is connected,
\item $M$ has two connected components and $p_i$ lie in different connected components.
\end{itemize}
Then $M_\varepsilon$ can be constructed so that assumption of Theorem~\ref{Ch1} holds. In particular, $\area(M_\varepsilon)\to \area(M)$ as $\varepsilon\to 0$.
\label{Ch2}
\end{theorem}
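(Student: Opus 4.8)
The plan is to split the statement into its two independent halves. The assertion that $\tilde M$ is open and dense is purely topological, while the assertion that $M_\varepsilon$ can be built so as to satisfy the hypothesis of Theorem~\ref{Ch1} is a geometric gluing argument; once the uniform bound $c_1(M_\varepsilon)\geqslant c>0$ is in hand, the spectral convergence and the area convergence follow at once from Theorem~\ref{Ch1} and the Remark following it. For the topological half, note that $K$ is continuous, so $K^{-1}(0)$ is closed and both $M\setminus K^{-1}(0)$ and $\mathrm{int}\,K^{-1}(0)$ are open; hence $\tilde M$ is open. Its complement is $K^{-1}(0)\setminus\mathrm{int}\,K^{-1}(0)$, the topological boundary of a closed set, which has empty interior (any open set contained in $K^{-1}(0)$ already lies in its interior); thus $M\setminus\tilde M$ is nowhere dense and $\tilde M$ is dense.

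For the construction I would fix $p_1,p_2\in\tilde M$ and first record the controlled local picture the curvature hypothesis provides around each point: either $K(p_i)\neq 0$, so that a small geodesic ball is uniformly comparable to a constant-curvature model, or $p_i\in\mathrm{int}\,K^{-1}(0)$, so that a whole neighborhood is flat. In either model one excises $B_\varepsilon$ and glues a thin tube across $\Gamma_\varepsilon$ carrying a rotationally symmetric metric that matches the two boundary circles and has area tending to $0$ with $\varepsilon$; this realizes both the isometric embedding of $\Omega_\varepsilon$ and the diffeomorphism $\Psi_\varepsilon$ demanded in the definition.

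The heart of the matter is the uniform lower bound $c_1(M_\varepsilon)\geqslant c$, and I expect the genuine obstacle to lie in controlling the dividing curves that thread the neck. When $M$ is connected the favourable mechanism is clear: a short curve can only bound a small cap — of area $O(\varepsilon^2)$, comparable to the square of its length — whereas separating a region of area bounded away from zero forces a cut across the fixed manifold $M$, where the isoperimetric constant of $M$ keeps the length bounded below; moreover cutting a single cross-section of the thin tube does not disconnect $M_\varepsilon$ at all. Making this dichotomy uniform in $\varepsilon$ is exactly where the clean local models furnished by the hypothesis $p_i\in\tilde M$ enter. The two-component case, in which the two pieces must be joined without producing a cheap separating curve, is the most delicate point and constitutes the main technical difficulty.

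Finally, granting $c_1(M_\varepsilon)\geqslant c$, Theorem~\ref{Ch1} yields $\lambda_j(\varepsilon)\to\lambda_j$ for all $j$, and the Remark following it — obtained by testing $c_1$ on $Y=\Gamma_\varepsilon$, whose length is $O(\varepsilon)$ — forces the area of the attached handle to vanish, so that $\area(M_\varepsilon)\to\area(M)$, which completes the argument.
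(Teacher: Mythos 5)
The paper does not actually prove this statement: Theorem~\ref{Ch2} is quoted from Chavel--Feldman~\cite{ChFeld}, and the author only adds a one-sentence remark that their arguments extend to the disconnected case. So the comparison is between your sketch and the cited construction. Your topological half is complete and correct: $\tilde M$ is open as a union of two open sets, and its complement is $K^{-1}(0)\setminus\mathrm{int}\,K^{-1}(0)$, which has empty interior, so $\tilde M$ is dense. The geometric half, however, is a plan rather than a proof: the entire content of the theorem is the uniform lower bound $c_1(M_\varepsilon)\geqslant c$, and you do not establish it --- you describe the mechanism you expect to work and explicitly defer the parts you identify as hard. One quantitative point you would need even in the connected case: testing $c_1$ on $Y=\Gamma_\varepsilon$ (length $O(\varepsilon)$, cutting off the handle) forces the handle to be built with area $O(\varepsilon^2)$, not merely area tending to $0$ as you allow.

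More seriously, the step you defer as ``the most delicate point'' --- the two-component case --- is where the plan, taken literally, breaks. With the definition of $c_1$ given in the paper, take $Y$ to be the single geodesic circle $\partial B_\varepsilon(p_1)\subset M_1$. Its length is $2\pi\varepsilon(1+o(1))$, and in $M_\varepsilon$ it separates $M_1\setminus\overline{B_\varepsilon(p_1)}$ from the tube together with $M_2\setminus\overline{B_\varepsilon(p_2)}$; both sides have area bounded below independently of $\varepsilon$. Hence $c_1(M_\varepsilon)=O(\varepsilon^2)\to 0$ no matter how the connecting neck is built, and the hypothesis of Theorem~\ref{Ch1} cannot be satisfied verbatim in this case. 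So the disconnected case cannot be settled by ``making the dichotomy uniform''; it requires either a modified isoperimetric functional (restricted to the competitors that actually enter the Chavel--Feldman estimate) or a direct proof of spectral convergence for thin connecting necks, as in~\cite{CE}. Your proposal should at minimum confront this explicit competitor; as written, the key estimate is missing and the strategy for the second bullet point of the theorem would fail.
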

\begin{remark} Let us remark that Chavel and Feldman considered only the case of a connected manifold $M$. However, their arguments could be extended almost without changes to the non-connected case as stated above.
\end{remark}
\subsection{Proof of Proposition~\ref{LowerBound}.}
Consider the flat equilateral torus $\tau_{eq}$. After suitable rescaling of the metric we have $\area(\tau_{eq}) = 4\pi^2/\sqrt{3}$ and $\lambda_1(\tau_{eq}) = 2$. For the euclidean sphere $\mathbb{S}^2$ of volume $4\pi$ one also has $\lambda_1(\tau_{eq}) = 2$. Let us take $n-1$ copies of $\mathbb{S}^2$ denoted by $S_i$, $i = 1,2,\ldots,n-1$. 
Thus for $T_n = \tau_{eq}\coprod_{i=1}^{n-1}S_i$ we have $\lambda_n(T_n) = 2$ and therefore $\Lambda_n(T_n) = 8\pi\left(n-1+\pi/\sqrt{3}\right)$.
Consecutive application of Theorem~\ref{Ch2} yields the existance of the sequence $M_\varepsilon$, diffeomorphic to torus, such that $\Lambda_n(M_\varepsilon)\to\Lambda_n(T_n)$ as $\varepsilon$ tends to $0$. This observation completes the proof of the first inequality.

The second inequality can be proved in the same fashion. The only difference is that instead of $\tau_{eq}$ one has to use Lawson bipolar Klein bottle $\tilde\tau_{3,1}$ (see Section~\ref{BipLawsonEstimate} for a defintion). It was proven in the paper~\cite{JNP} that $\Lambda_1(\tilde\tau_{3,1}) = 12\pi E\left(2\sqrt{2}/3\right)$. By a suitable rescaling of the metric on $\tilde\tau_{3,1}$, one can assume that $\lambda_1(\tilde\tau_{3,1}) = 2$ and then apply construction of the previous paragraph. 
\section{Otsuki tori}
\subsection{Connection with minimal submanifolds of the sphere.}
Let $\psi\colon  M \looparrowright \mathbb{S}^n$ be a minimal immersion in the unit sphere with canonical metric $g_{can}$. We denote by $\Delta$ the Laplace-Beltrami operator on $M$ associated with the metric $\psi^*g_{can}$. Let us introduce the Weyl's eigenvalues counting funcion
$$
N(\lambda) = \#\{i|\lambda_i(M,g)<\lambda\}.
$$
The following theorem provides a general approach to finding smooth extremal metrics.
\begin{theorem}[El Soufi, Ilias,~\cite{ElSoufiIlias2}] Let $\psi\colon M \looparrowright 
\mathbb{S}^n$ be a minimal
immersion in the unit sphere $\mathbb{S}^n$ endowed with the canonical metric $g_{can}$. 

Then the metric $\psi^*g_{can}$ on $M$ is extremal for the functional $\Lambda_{N(2)}(M,g)$.
\label{th1}
\end{theorem}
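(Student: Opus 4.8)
The plan is to combine two classical ingredients: Takahashi's characterization of minimal immersions into spheres, and the first-order variational formula for Laplace eigenvalues under deformations of the metric (as in~\cite{Berger, BU, ElSoufiIlias2}). Write $\psi = (\psi_1,\dots,\psi_{n+1})$ for the components of the immersion and $g = \psi^*g_{can}$. Takahashi's theorem says that $\psi$ is minimal in $\mathbb{S}^n$ if and only if $\Delta\psi_\alpha = 2\psi_\alpha$ for every $\alpha$, the factor $2$ being the dimension of $M$. Hence $2$ is a Laplace eigenvalue of $(M,g)$ with the coordinate functions among its eigenfunctions, and by the definition of the counting function $\lambda_{N(2)}(M,g) = 2$. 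Moreover, since $\mathbb{S}^n\subset\mathbb{R}^{n+1}$ carries the induced metric, the very definition of the pullback yields the two pointwise identities $\sum_\alpha\psi_\alpha^2 = 1$ and $\sum_\alpha d\psi_\alpha\otimes d\psi_\alpha = g$; tracing the latter gives $\sum_\alpha|\nabla\psi_\alpha|^2 = \operatorname{tr}_g g = 2$.

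Next I would set up the variation. Let $g_t$ be an analytic deformation with $g_0 = g$ and put $\dot g = \partial_tg_t|_{t=0}$. Using the standard variations $\partial_t g_t^{ab}|_{0} = -g^{ac}g^{bd}\dot g_{cd}$ and $\partial_t\,dv_{g_t}|_0 = \tfrac12(\operatorname{tr}_g\dot g)\,dv_g$, a direct computation of the Rayleigh quotient shows that for an eigenfunction $f$ associated with $\lambda = 2$ the first-order variation is governed by the quadratic form
\[
B(f) = \int_M\Bigl(\tfrac12(\operatorname{tr}_g\dot g)\bigl(|\nabla f|^2 - 2f^2\bigr) - \dot g(\nabla f,\nabla f)\Bigr)\,dv_g,
\]
in the sense that the one-sided derivatives of the eigenvalue branches issuing from $\lambda_{N(2)} = 2$ are the eigenvalues of $B$ relative to the $L^2$ inner product on the eigenspace $E$ of $\lambda=2$ (analytic perturbation theory, see~\cite{Berger, BU, ElSoufiIlias2}). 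Because $\Lambda_{N(2)}$ is scale invariant, I may rescale $g_t$ so that $\area(M,g_t)$ is constant, i.e.\ assume $\int_M\operatorname{tr}_g\dot g\,dv_g = 0$; then the variation of $\Lambda_{N(2)}$ is simply $\area(M,g)$ times that of $\lambda_{N(2)}$.

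The heart of the argument is a trace computation. Summing $B$ over the coordinate functions and inserting the three identities above, the coefficient of $\tfrac12\operatorname{tr}_g\dot g$ becomes $\sum_\alpha(|\nabla\psi_\alpha|^2-2\psi_\alpha^2) = 2 - 2 = 0$, while $\sum_\alpha\dot g(\nabla\psi_\alpha,\nabla\psi_\alpha) = \operatorname{tr}_g\dot g$ thanks to $\sum_\alpha d\psi_\alpha\otimes d\psi_\alpha = g$. Hence $\sum_\alpha B(\psi_\alpha) = -\int_M\operatorname{tr}_g\dot g\,dv_g = 0$ under the area-preserving normalization. Since the $\psi_\alpha$ are not all zero, this forces $B$, restricted to the subspace they span inside $E$, to be neither positive nor negative definite, and the same then holds on all of $E$ (the restriction of a definite form to a subspace stays definite). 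Consequently the largest eigenvalue of $B$ on $E$ is $\geqslant 0$ and the smallest is $\leqslant 0$. Matching, via the ordering of the branches, the right derivative of $\lambda_{N(2)}$ with the smallest such eigenvalue and the left derivative with the largest, I obtain $\frac{d}{dt}\Lambda_{N(2)}|_{t=0+}\leqslant 0\leqslant\frac{d}{dt}\Lambda_{N(2)}|_{t=0-}$, which is exactly extremality.

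I expect the main obstacle to be the analytic perturbation step: justifying that the one-sided derivatives of the \emph{ordered} eigenvalues are computed by $B$ on the full eigenspace $E$, and correctly pairing the smallest and largest eigenvalues of $B$ with the right and left derivatives, requires care when $\lambda=2$ has multiplicity greater than one, and is precisely where the analyticity of $g_t$ is essential. Everything else is bookkeeping with the pointwise identities supplied by Takahashi's theorem.
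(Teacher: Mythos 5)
The paper does not prove this theorem itself --- it is quoted from the cited reference \cite{ElSoufiIlias2} --- and your argument is, in substance, exactly the standard El Soufi--Ilias proof: Takahashi's theorem gives $\lambda_{N(2)}=2$ together with the pointwise identities $\sum_\alpha\psi_\alpha^2=1$ and $\sum_\alpha d\psi_\alpha\otimes d\psi_\alpha=g$, the perturbation-theoretic quadratic form $B$ then has vanishing sum over the coordinate functions under the area-preserving normalization, so it is neither positive nor negative definite on the eigenspace, and the pairing of its extreme eigenvalues with the one-sided derivatives of $\lambda_{N(2)}$ (valid precisely because $N(2)$ is the \emph{first} index at which the eigenvalue $2$ occurs) yields extremality. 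The proposal is correct.
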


Therefore, one can start with minimal submanifold $N$ of the unit sphere then compute $N(2)$ and the metric induced on $N$ by this immersion is extremal for the functional $\Lambda_{N(2)}(N,g)$. However, for a given minimal submanifold there is no algorithm for computing the exact value of $N(2)$. Nevertheless, this approach was succesfully realized by Penskoi in the papers~\cite{PenskoiOtsuki, PenskoiLawson} for metrics (A),(B) as well as by the author in the paper~\cite{Karpukhin} for metrics (D). Some ideas of this approach was partially used in the paper~\cite{Lapointe} for metrics (C).

\subsection{Reduction theorem for minimal submanifolds.}
 Let $M$ be a \label{Connection} 
Riemannian manifold equipped with a metric $g'$ and let $G$ be a compact group acting on $M$ by isometries. For every point $x\in M$ let us denote by 
$G_x$ the stability subgroup of $x$.
\begin{definition}
For two points $x,y\in M$ we say that $x\preccurlyeq y$ if $G_x\subset gG_yg^{-1}$ for some $g\in G$. The orbit $Gx$ is the {\em orbit of principal type} if for any point $y\in 
M$ one has $x\preccurlyeq y$. 
\end{definition}

Let $M^*$ stand for the union of all orbits of principal type, then $M^*$ is an open dense submanifold of $M$ (see~\cite{MSY}). Moreover, $M^*/G$ carries a natural Riemannian metric $g$ defined by the formula $g(X,Y) = g'(X',Y')$, where $X,Y$ are
tangent vectors at $x\in M^*/G$ and $X',Y'$ are tangent vectors at a point $x'\in\pi^{-1}(x)\subset M^*$ such that $X'$
and $Y'$ are orthogonal to the orbit $\pi^{-1}(x)$ and $d\pi(X')=X,\,d\pi(Y')=Y$.

Let $f\colon N \looparrowright M$ be a $G$-invariant immersed submanifold, i.e. a manifold equipped with 
an action of $G$ by isometries such that $g\cdot f(x) = f(g\cdot x)$ for any $x\in N$.
\begin{definition} A {\em cohomogeneity} of a $G$-invariant immersed submanifold $N$ is the number $\dim N - 
\nu$, where $\nu$ is the dimension of the orbits of principal 
type.
\end{definition}
Let us define for $x\in M^*/G$ a volume function $V(x)$ by the formula $V(x) = \mathrm{Vol}(\pi^{-1}(x))$. Also for each integer
$k \geqslant 1$ let us define a metric $g_k = V^{\frac{2}{k}}g$.
\begin{proposition}[Hsiang, Lawson~\cite{HsiangLawson}] Let $f\colon N\looparrowright M^*$ be a $G$-invariant immersed submanifold of cohomogeneity $k$, and
let $M^*/G$ be equipped with the metric $g_k$. Then $f\colon N\looparrowright M^*$ is minimal if and only if 
$\bar f\colon N/G\looparrowright M^*/G$ is minimal.
\label{HsiangTh}    
\end{proposition}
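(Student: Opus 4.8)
The plan is to reduce the statement to a single volume identity together with the principle of symmetric criticality. Recall that an immersion is minimal precisely when it is a critical point of the volume functional under compactly supported variations. Since the whole set-up is $G$-invariant, I would first observe that the mean curvature vector of a $G$-equivariant immersion $f$ is itself $G$-equivariant and everywhere orthogonal to the orbits; consequently, by Palais' principle of symmetric criticality, $f$ is minimal if and only if it is a critical point of the volume functional restricted to the class of $G$-invariant variations. On the other side, $G$-invariant variations $f_t$ of $f$ are in one-to-one correspondence with variations $\bar f_t$ of the quotient immersion $\bar f$, so the whole problem is transported to $N/G$ once the two volume functionals are matched.

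The heart of the argument is therefore the identity $\vol_{g'}(N) = \vol_{\bar f^* g_k}(N/G)$, which I would establish by fibre integration over the Riemannian submersion $\pi_N\colon N\to N/G$. Because $f$ is an isometric immersion for the induced metric and is $G$-equivariant, $df$ carries the tangent space of the orbit $G\cdot x$ isometrically onto the tangent space of $G\cdot f(x)$, and hence, preserving orthogonality, carries the horizontal space of $N$ into the horizontal space of $M^*$. This yields two facts at once: the orbit $\pi_N^{-1}(\bar x)$ has the same volume as $\pi^{-1}(\bar f(\bar x))$, namely $(V\circ\bar f)(\bar x)$, and the metric that $N/G$ inherits from the induced metric on $N$ coincides with $\bar f^* g$. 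Fibre integration then gives
$$
\vol_{g'}(N)=\int_{N/G}(V\circ\bar f)\,d\vol_{\bar f^* g}.
$$
Now the defining feature of the exponent is used: since $N$ has cohomogeneity $k$, the quotient $N/G$ is $k$-dimensional, so the volume form of $\bar f^* g_k = (V\circ\bar f)^{2/k}\,\bar f^* g$ is $(V\circ\bar f)^{(2/k)(k/2)}\,d\vol_{\bar f^* g} = (V\circ\bar f)\,d\vol_{\bar f^* g}$. The right-hand side above is thus exactly $\vol_{\bar f^* g_k}(N/G)$, proving the identity.

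To finish I would note that the identity holds verbatim for every $G$-equivariant immersion, in particular for each member of a $G$-invariant variation $f_t$; differentiating in $t$ shows that $f$ is a critical point of $\vol_{g'}$ among $G$-invariant variations if and only if $\bar f$ is a critical point of $\vol_{g_k}$ among all variations of $N/G$ in $M^*/G$, i.e. if and only if $\bar f$ is minimal. Combining this with the symmetric-criticality step of the first paragraph gives the desired equivalence.

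The step I expect to require the most care is the symmetric-criticality reduction, i.e.\ justifying that minimality (criticality against all variations) may be tested against $G$-invariant variations alone. Concretely one must verify that the mean curvature vector of an equivariant immersion is $G$-invariant and horizontal, so that averaging an arbitrary normal variation over $G$ does not change the first variation of volume; on the regular part $M^*$ all orbits are principal and of the same type, which is what makes this averaging legitimate and keeps $\pi_N$ a genuine Riemannian submersion. The remaining computations, namely the fibre integration and the bookkeeping of the conformal factor $V^{2/k}$, are routine once the horizontal spaces have been identified.
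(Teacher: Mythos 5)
The paper does not prove this proposition at all: it is quoted verbatim from Hsiang--Lawson \cite{HsiangLawson} and used as a black box, so there is no internal proof to compare yours against. Your reconstruction is the standard argument for the reduction theorem and is essentially sound: the identification of horizontal spaces making $\pi_N$ a Riemannian submersion onto $(N/G,\bar f^*g)$, the fibre-integration identity $\vol_{g'}(N)=\int_{N/G}(V\circ\bar f)\,d\vol_{\bar f^*g}$, the observation that in dimension $k$ the conformal factor $V^{2/k}$ produces exactly the density $V$ in the volume form, and the reduction of criticality to $G$-invariant variations via equivariance of the mean curvature vector (which is indeed horizontal, since the orbit directions are tangent to $f(N)$). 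Two small points deserve a sentence each if you write this up. First, the equality $\vol(\pi_N^{-1}(\bar x))=V(\bar f(\bar x))$ presumes $G_x=G_{f(x)}$; in general $f$ restricted to an orbit could be a covering of degree $[G_{f(x)}:G_x]$, but on the principal stratum this degree is locally constant, so the volume identity holds up to a constant factor and criticality is unaffected. Second, the symmetric-criticality step should be phrased for compactly supported variations (averaging a cutoff over the compact group $G$ keeps it compactly supported), since $N$ is not assumed compact in the statement; with that phrasing the averaging argument you sketch, using $G$-invariance of $H$, closes the loop correctly.
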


\subsection{Otsuki tori.} 
Otsuki tori
\label{OtsukiDef} 
were introduced by Otsuki in the paper~\cite{Otsuki}. Let us 
recall the concise description by Penskoi from the paper~\cite{PenskoiOtsuki}. For more details see Section 1.2 of the paper~\cite{PenskoiOtsuki}. Consider the action of $SO(2)$ on 
the three-dimensional unit sphere $\mathbb{S}^3 \subset \mathbb{R}^4$ given by the formula
$$
\alpha\cdot(x,y,z,t) = (\cos\alpha x+\sin\alpha y, -\sin\alpha x +\cos\alpha y, z, t),
$$ 
where $\alpha \in [0,2\pi)$ is a coordinate on $SO(2)$. The space of orbits $\mathbb{S}^3/SO(2)$ is the closed half-sphere
$\mathbb{S}^2_+$,
$$
q^2+z^2+t^2=1, \qquad q\geqslant 0,
$$
where a point $(q,z,t)$ corresponds to the orbit $(q\cos\alpha,q\sin\alpha,z,t) \in \mathbb{S}^3$. The space of principal orbits
$(\mathbb{S}^3)^*/SO(2)$ is the open half sphere 
$$
\mathbb{S}^2_{>0} = \{(q,z,t)\in\mathbb{S}^2|q>0\}.
$$ 
Let us introduce the spherical coordinates in the space of orbits,
$$
\left\{
   \begin{array}{rcl}
	t &=& \cos\varphi\sin\theta,\\
	z &=& \cos\varphi\cos\theta,\\
	q &=& \sin\varphi.\\
   \end{array}
\right.
$$
Since we look for minimal submanifolds of cohomogeneity 1, the Hsiang-Lawson's metric 
is given by the formula
\begin{equation}
V^2(d\varphi^2 + \cos^2\varphi d\theta^2) = 4\pi^2\sin^2\varphi(d\varphi^2 + \cos^2\varphi d\theta^2). \label{OtsukiMetric}
\end{equation}
\begin{definition} An immersed minimal $SO(2)$-invariant two-dimensional torus in $\mathbb{S}^3$ such that its image
 by the projection $\pi\colon\mathbb{S}^3\to\mathbb{S}^3/SO(2)$ is a closed geodesics in $(\mathbb{S}^3)^*/SO(2)$ 
endowed with the metric (\ref{OtsukiMetric}) is called an {\em Otsuki torus}.
\end{definition}
The following proposition was proved in the paper~\cite{PenskoiOtsuki}.
\begin{proposition} Except one particular case given by the equation $\psi = \pi/4$, Otsuki tori are in\label{PenskoiProp} 
one-to-one correspondence with rational numbers $p/q$ such that
$$
\frac{1}{2}<\frac{p}{q}<\frac{\sqrt{2}}{2},\qquad p,q>0,\,(p,q) = 1.
$$
\end{proposition}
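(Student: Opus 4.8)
The plan is to reduce the classification to that of the closed geodesics of the surface of revolution $\bigl((\mathbb{S}^3)^*/SO(2),g_1\bigr)$, where $g_1$ is the metric~(\ref{OtsukiMetric}), and then to analyze these geodesics via the Clairaut first integral coming from the rotational symmetry in $\theta$. By Proposition~\ref{HsiangTh} an $SO(2)$-invariant torus of cohomogeneity $1$ is minimal exactly when its projection is a geodesic of~(\ref{OtsukiMetric}); since every orbit over a point with $\varphi\in(0,\pi/2)$ is a circle, the preimage of a closed geodesic avoiding the boundary $\varphi=0$ is an immersed torus. Thus it suffices to describe the closed geodesics of~(\ref{OtsukiMetric}).

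First I would write the metric as $A(\varphi)\,d\varphi^2+B(\varphi)\,d\theta^2$ with $A=4\pi^2\sin^2\varphi$ and $B=\pi^2\sin^2 2\varphi$. Since $\partial_\theta$ is a Killing field, a unit-speed geodesic satisfies the Clairaut relation $B\dot\theta=c$ for a constant $c$, together with $A\dot\varphi^2=1-c^2/B$. The turning points $\dot\varphi=0$ occur where $B=c^2$, i.e. $\sin 2\varphi=c/\pi$; for $c\in(0,\pi)$ these are two values $\varphi_1<\pi/4<\varphi_2$ symmetric about $\pi/4$, between which the geodesic oscillates. The constant geodesic $\varphi\equiv\pi/4$ (the maximum of $B$, attained as $c\to\pi$) is precisely the excluded equatorial circle whose preimage is the Clifford torus. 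Eliminating the parameter gives
$$
\frac{d\theta}{d\varphi}=\frac{c}{\cos\varphi\,\sqrt{B-c^2}},
$$
so over one half-oscillation the longitude increases by $\Delta\theta(c)=\int_{\varphi_1}^{\varphi_2}\frac{c\,d\varphi}{\cos\varphi\sqrt{B-c^2}}$ and over a full period by $2\Delta\theta(c)$. Setting $\rho(c)=\Delta\theta(c)/\pi$, the oscillating geodesic closes up after $q$ periods with the meridian winding $p$ times exactly when $\rho(c)=p/q$, the condition $(p,q)=1$ selecting the primitive geodesic. Hence Otsuki tori correspond to the values of $c$ for which $\rho(c)$ is rational.

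Next I would determine the range of $\rho$. Continuity on $(0,\pi)$ is immediate. For $c\to\pi$ I would linearize near the stable equilibrium $\varphi=\pi/4$: writing $\varphi=\pi/4+u$ one gets $B-c^2\approx 4\pi^2(u_0^2-u^2)$ and $\cos\varphi\approx 1/\sqrt{2}$, whence $\Delta\theta\to\pi/\sqrt{2}$ and $\rho\to\sqrt{2}/2$. For $c\to 0$ the turning points tend to $0$ and $\pi/2$; the only non-vanishing contribution comes from a neighbourhood of the centre $\varphi=\pi/2$, where the factor $1/\cos\varphi$ is singular, and a direct rescaling there gives $\Delta\theta\to\pi/2$, hence $\rho\to 1/2$. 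Thus $\rho$ maps $(0,\pi)$ into $(1/2,\sqrt{2}/2)$ and, by the intermediate value theorem, attains every value — in particular every rational — of this interval.

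The bijection then follows once I establish that $\rho$ is strictly monotone on $(0,\pi)$, for then each rational $p/q\in(1/2,\sqrt{2}/2)$ is the rotation number of exactly one geodesic. This monotonicity is the main obstacle: period and rotation functions of surfaces of revolution are not monotone in general, and the integrand has integrable singularities at both turning points. I would attack it through the substitution $w=\cos 2\varphi$, which converts $\Delta\theta(c)$ into an integral of the form $\int_{-a}^{a}\frac{dw}{(1+w)\sqrt{1-w}\,\sqrt{a^2-w^2}}$ with $a=\sqrt{1-(c/\pi)^2}$, i.e. a combination of complete elliptic integrals of the first and third kind; differentiating in $c$ and invoking standard monotonicity and convexity relations among $K$, $E$ and $\Pi$ should yield $\rho'(c)\neq 0$ throughout. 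Granting monotonicity, both surjectivity onto the rationals of $(1/2,\sqrt{2}/2)$ and injectivity hold, the equatorial case $\varphi=\pi/4$ is the sole exception, and the proposition follows.
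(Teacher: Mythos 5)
Your proposal follows essentially the same route as the paper: both reduce via Proposition~\ref{HsiangTh} to classifying closed geodesics of the metric~(\ref{OtsukiMetric}), use the Clairaut first integral to define a rotation function (your $\Delta\theta(c)$ is the paper's $\Omega(a)$ under $c=\pi\sin 2a$), identify the limiting values $\pi/2$ and $\pi/\sqrt{2}$ at the two ends of the parameter range, and deduce the correspondence with rationals $p/q\in(1/2,\sqrt{2}/2)$ from continuity plus strict monotonicity. The one step you only sketch --- monotonicity of the rotation function --- is precisely the step the paper defers to Otsuki's original article, and your proposed elliptic-integral attack is in fact viable and is essentially carried out later in the paper, where $\Omega'(a)$ is expressed in formula~(\ref{domega}) and its sign is determined by Proposition~\ref{MainLemma}.
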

\begin{definition} By $O_{p/q}$ we denote the Otsuki torus corresponding to 
$p/q$. Following the paper~\cite{PenskoiOtsuki} we reserve the term "Otsuki tori"
for the tori $O_{p/q}$.
\end{definition} 
In order to fix notations we give a sketch of the proof of Proposition~\ref{PenskoiProp}. 
\begin{proof} Let us use the standard notation for the coefficients of the metric~(\ref{OtsukiMetric}),
$$
E = 4\pi^2\sin^2\varphi, \qquad G = 4\pi^2\sin^2\varphi\cos^2\varphi.
$$
As we know the velocity vector of a geodesic has a constant length. Suppose this length equals 1. Then this assumption as well as the equation of geodesics for 
$\ddot\theta$ provides the following two equations,
\begin{equation}
\label{dtheta}
\dot\theta = \frac{\sin a\cos a}{2\pi\cos^2\varphi\sin^2\varphi},
\end{equation}
\begin{equation}
\label{dvarphi}
\dot\varphi^2 = \frac{\sin^2\varphi\cos^2\varphi - \sin^2 a\cos^2 a}{4\pi^2\sin^4\varphi\cos^2\varphi},
\end{equation} 
where $a$ is the minimal value of $\varphi$ on the geodesic. Then the geodesic is situated in the annulus $a\leqslant \varphi \leqslant \dfrac{\pi}{2} - a$.  
We choose a natural parameter $t$ such that $\varphi(0) = a$.

Let us denote by $\Omega(a)$ the difference between the value of $\theta$ corresponding to $\varphi = a$ and the closest to it value 
of $\theta$ corresponding to $\varphi = \pi/2-a$. It is clear that
$$
\Omega(a) = \sin a\cos a\int\limits_a^{\pi/2-a}\frac{d\varphi}{\cos\varphi\sqrt{\sin^2\varphi\cos^2\varphi - \sin^2 a\cos^2 a}}.
$$

The geodesic is closed iff $\Omega(a) = p\pi/q$. The rest of the proof follows from the following properties of the function
 $\Omega(a)$, see the paper~\cite{Otsuki},
\begin{itemize}
\item[1)] $\Omega(a)$ is continuous and monotonous on $\left(0,\pi/4\right]$,
\item[2)] $\lim_{a\to 0+}\Omega(a) = \pi/2$ and $\Omega\left(\pi/4\right) = \pi/\sqrt{2}$.
\end{itemize}
\end{proof}

\subsection{Estimates for $\Lambda_{2p-1}(O_{p/q})$.} According to the paper~\cite{PenskoiOtsuki}, the metric on an Otsuki torus $O_{p/q}$ is extremal 
\label{OtsukiEstimate}
for the functional $\Lambda_{2p-1}(\mathbb{T}^2,g)$. The goal of this section is to prove the following proposition.
\begin{proposition} For $p,q$, such that $(p,q)=1$ and $1/2<p/q<\sqrt{2}/2$, the following inequality holds, 
$$
8\pi\left(2p-2 +\frac{\pi}{\sqrt{3}} \right)>\Lambda_{2p-1}(O_{p/q}).
$$
\label{OtsukiEstimateProp} 
\end{proposition}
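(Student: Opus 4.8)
The plan is to combine the lower bound for $\sup\Lambda_{2p-1}(\mathbb{T}^2,g)$ obtained in Proposition~\ref{LowerBound} with an explicit upper bound for $\Lambda_{2p-1}(O_{p/q})$, and to show that the latter is strictly smaller than the former. By Proposition~\ref{LowerBound} we already know
$$
\sup\Lambda_{2p-1}(\mathbb{T}^2,g)\geqslant 8\pi\left(2p-2+\frac{\pi}{\sqrt{3}}\right),
$$
so it suffices to prove the strict inequality $\Lambda_{2p-1}(O_{p/q}) < 8\pi\left(2p-2+\pi/\sqrt 3\right)$ directly. The quantity $\Lambda_{2p-1}(O_{p/q}) = \lambda_{2p-1}(O_{p/q})\area(O_{p/q})$ must therefore be computed or estimated from the geometry of the Otsuki torus. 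Since the metric is extremal for $\Lambda_{2p-1}$ and arises from a minimal immersion, by Theorem~\ref{th1} the eigenvalue $\lambda_{2p-1} = 2$ is forced by the value $N(2)=2p-1$; this reduces the problem to estimating the area $\area(O_{p/q})$, since $\Lambda_{2p-1}(O_{p/q}) = 2\,\area(O_{p/q})$.

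First I would write the area of $O_{p/q}$ as an integral over the profile geodesic in the orbit space, using the Hsiang--Lawson metric~(\ref{OtsukiMetric}) together with the formulas~(\ref{dtheta}) and~(\ref{dvarphi}) for $\dot\theta$ and $\dot\varphi$. Tracking the covering data (the torus projects to a geodesic that goes back and forth in the annulus $a\leqslant\varphi\leqslant\pi/2-a$, winding $q$ times), the area will reduce to an integral of the form $C\cdot\int_a^{\pi/2-a}(\cdots)\,d\varphi$ depending on the parameter $a=a(p/q)$; I expect this to be expressible through complete elliptic integrals after a standard substitution, matching the notation $K(k),E(k),\Pi(n,k)$ introduced in the introduction. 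The goal is an exact closed-form expression $\area(O_{p/q}) = F(a)$ so that the target inequality becomes $2F(a) < 8\pi(2p-2+\pi/\sqrt 3)$.

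The main obstacle will be controlling $F(a)$ uniformly over the whole admissible family, i.e.\ over all $a$ corresponding to rationals $p/q$ in $(1/2,\sqrt2/2)$, rather than for a single torus. I anticipate two complementary regimes. For large $p$ (equivalently $a\to 0$, where $\Omega(a)\to\pi/2$ so $p/q\to 1/2$), I would extract the leading asymptotics of $F(a)$ and show the area grows essentially linearly in $p$ with slope strictly below $8\pi$, so the inequality holds for all sufficiently large $p$ with room to spare; the comparison here should be against the $8\pi(2p-2)$ part of the bound. For the finitely many small values of $p$ that remain, I would bound $F(a)$ by monotonicity in $a$ (using properties~(1)--(2) of $\Omega(a)$ from Proposition~\ref{PenskoiProp} to pin down the range of $a$) and verify the inequality by explicit elliptic-integral estimates. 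The delicate point is that the gap $8\pi\cdot\pi/\sqrt3$ is a fixed additive constant, so the asymptotic slope comparison must be made quantitative enough to guarantee strictness down to the smallest $p$; establishing the correct monotone behaviour of $F(a)$ as a function of $a$, and hence reducing the uniform estimate to a single worst case, is where the real work lies.
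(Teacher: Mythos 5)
Your reduction is sound as far as it goes, and it is the same starting point the paper uses: the doubled length of the profile geodesic in the Hsiang--Lawson metric gives the closed form $\Lambda_{2p-1}(O_{p/q}) = 8\pi q\cos a\, E\bigl(\sqrt{1-\tan^2 a}\bigr) =: 8\pi q\,\Phi(a)$, where $a$ is determined by $\Omega(a)=p\pi/q$. But the plan breaks down exactly at the point you yourself flag as ``where the real work lies'', and the breakdown is structural, not merely a matter of unfinished computation. The target bound $8\pi(2p-2+\pi/\sqrt3)$ involves $p$ and $q$ separately, while the area depends only on $a$; dividing by $8\pi q$ and substituting $p/q=\Omega(a)/\pi$, the inequality to prove becomes
$$
\frac{2}{\pi}\Omega(a)-\Phi(a) > \frac{2\sqrt{3}-\pi}{q\sqrt{3}}.
$$
So the real object is not the area function $F(a)$ alone but the difference of the two elliptic-integral functions $\Omega$ and $\Phi$, compared against a threshold that shrinks like $1/q$ --- and $q$ is not a monotone (or even continuous) function of $a$, so ``reducing to a single worst case by monotonicity of $F$'' cannot close the argument. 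Your two regimes are also mis-parametrized: large $p$ is not equivalent to $a\to0$ (one can have $p$ huge with $p/q$ near $\sqrt2/2$, i.e.\ $a$ near $\pi/4$), and the complement of the regime $a\to 0$ contains infinitely many pairs $(p,q)$, not ``finitely many small values of $p$''.

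The paper resolves this by splitting on $a$ rather than on $p$. For $a\in[1/5,\pi/4]$ it shows that $\frac{2}{\pi}\Omega-\Phi$ is increasing --- the key observation being that $\Omega'$ and $\Phi'$ are both proportional to the same expression $K(\beta)-\frac{2}{2-\beta^2}E(\beta)\geqslant 0$, with $\beta=\sqrt{1-\tan^2 a}$ --- so one numerical evaluation at $a=1/5$ together with $q\geqslant 3$ suffices. For $a\in(0,1/5]$, where $\frac{2}{\pi}\Omega(a)-\Phi(a)\to 0$ and $1/q\to 0$ simultaneously, it applies the mean value theorem with the derivative bounds $\Phi'<1/2$ and an explicit lower bound on $\Omega'$, using $\Omega(a)-\pi/2=a\,\Omega'(\xi)\geqslant\pi/(2q)$ to compare the rate at which the margin opens with the rate at which the threshold closes. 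Your proposal identifies the correct closed form to aim for, but it contains neither the coupling of $\Omega$ and $\Phi$ through their common derivative factor nor any mechanism for the regime where margin and threshold vanish at the same rate; as written it is a statement of the difficulty rather than a proof.
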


In order to prove Proposition~\ref{OtsukiEstimateProp} we have to prove several auxiliary propositions.
\begin{proposition} For $a\in\left(0,\pi/4\right)$ such that $\Omega(a) = p\pi/q$ one has
$$
\Lambda_{2p-1}(O_{p/q}) = 8\pi q\cos aE\left(\sqrt{1-\tan^2a}\right).
$$
\label{valueOtsuki}
\end{proposition}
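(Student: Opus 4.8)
The plan is to split the scale-invariant quantity $\Lambda_{2p-1}(O_{p/q}) = \lambda_{2p-1}(O_{p/q})\,\area(O_{p/q})$ into an eigenvalue part and an area part, and then to evaluate the resulting integral in terms of $E$. First I would pin down the eigenvalue. Since $O_{p/q}$ is a minimal surface in $\mathbb{S}^3$, the restrictions of the ambient coordinate functions are eigenfunctions of $\Delta$ with eigenvalue $2=\dim O_{p/q}$, so $2$ lies in the spectrum. By Theorem~\ref{th1} the extremality of the induced metric for $\Lambda_{2p-1}$ means exactly that $N(2)=2p-1$, i.e. precisely $2p-1$ eigenvalues (with multiplicity) are strictly below $2$. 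Because $2$ is attained, this forces $\lambda_{2p-1}(O_{p/q})=2$, and hence $\Lambda_{2p-1}(O_{p/q})=2\area(O_{p/q})$.

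Next I would compute the area. The torus is foliated by the circular $SO(2)$-orbits lying over the projected closed geodesic $\gamma$. The orbit through a point with coordinate $\varphi$ has length $V=2\pi\sin\varphi$ and is orthogonal to $\gamma$, so the area element factors as the orbit length element times the base arc-length element of $\gamma$. Integrating over the fiber first gives $\area(O_{p/q})=\int_\gamma V\,ds_g$, which by the relation $g_1=V^2 g$ of~(\ref{OtsukiMetric}) is exactly the $g_1$-length of $\gamma$. Parametrizing by the natural parameter and using~(\ref{dvarphi}), the $g_1$-length of a single swing of $\varphi$ from $a$ to $\pi/2-a$ is
$$
L_{1/2}=\int\limits_a^{\pi/2-a}\frac{2\pi\sin^2\varphi\cos\varphi\,d\varphi}{\sqrt{\sin^2\varphi\cos^2\varphi-\sin^2 a\cos^2 a}}.
$$
To count the swings making up the closed geodesic I would use that each swing advances $\theta$ by $\Omega(a)=p\pi/q$, so one full oscillation advances it by $2p\pi/q$; since $(p,q)=1$, the geodesic closes after exactly $q$ oscillations, i.e. $2q$ swings, giving $\area(O_{p/q})=2q\,L_{1/2}$.

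The final and most delicate step is the evaluation of $L_{1/2}$. Substituting $s=\sin\varphi$ factors the radicand as $(s^2-\sin^2 a)(\cos^2 a-s^2)$, and the further substitution $s^2=\cos^2 a-(\cos 2a)\,x^2$ (legitimate because $a<\pi/4$ makes $\cos 2a>0$) reduces the integral to
$$
L_{1/2}=2\pi\cos a\int\limits_0^1\frac{\sqrt{1-(1-\tan^2 a)\,x^2}}{\sqrt{1-x^2}}\,dx=2\pi\cos a\,E\left(\sqrt{1-\tan^2 a}\right).
$$
Combining the three steps yields $\Lambda_{2p-1}(O_{p/q})=2\cdot 2q\cdot 2\pi\cos a\,E(\sqrt{1-\tan^2 a})=8\pi q\cos a\,E(\sqrt{1-\tan^2 a})$. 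I expect the main obstacle to be bookkeeping rather than ideas: correctly establishing the oscillation count $2q$ from the closing condition $\Omega(a)=p\pi/q$, and choosing the substitution that lands precisely on the modulus $\sqrt{1-\tan^2 a}$. The conceptual heart, which makes the elliptic integral manageable, is the identification of the area of the invariant torus with the $g_1$-length of the projected geodesic.
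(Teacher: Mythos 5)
Your argument is correct and follows essentially the same route as the paper: the paper likewise reduces $\Lambda_{2p-1}(O_{p/q})$ to twice the $g_1$-length of the projected geodesic (citing Proposition 13 of~\cite{PenskoiOtsuki} for this reduction, which you instead re-derive via $\lambda_{2p-1}=2$ and the coarea identity $\area(O_{p/q})=\int_\gamma V\,ds_g$), uses the same count of $2q$ swings, and evaluates the same integral $\int_a^{\pi/2-a}2\pi\sin^2\varphi\cos\varphi\,\bigl(\sin^2\varphi\cos^2\varphi-\sin^2a\cos^2a\bigr)^{-1/2}\,d\varphi=2\pi\cos a\,E\bigl(\sqrt{1-\tan^2a}\bigr)$ by an equivalent chain of substitutions. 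One minor logical point: extremality for $\Lambda_{2p-1}$ does not by itself imply $N(2)=2p-1$ (a metric may be extremal for several functionals), so the equality $N(2)=2p-1$ should be quoted as the computation of~\cite{PenskoiOtsuki} rather than deduced from Theorem~\ref{th1}; with that citation your conclusion $\lambda_{2p-1}(O_{p/q})=2$ is sound.
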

\begin{proof} Let us use the notations of Proposition~\ref{PenskoiProp}. As we know, 
$$
\dot \varphi = \pm\frac{\sqrt{G-c^2}}{\sqrt{EG}},
$$
where $c = 2\pi\sin a\cos a$. Therefore, the length of the segment on the geodesic $\pi(O_{p/q})$ between the closest points with $\varphi = a$ and $\varphi=\pi/2-a$ is equal to $2\pi I$, where
$$
I = \int_a^{\pi/2-a}\frac{\sin\varphi}{\sqrt{1-\sin^2a\cos^2a/(\sin^2\varphi\cos^2\varphi)}}d\varphi.
$$

Let us express $I$ in terms of elliptic integrals,
\begin{equation*}
\begin{split}
I =& 
\int_{\sin a}^{\cos a}\frac{x\sqrt{1-x^2}}{\sqrt{x^2(1-x^2) - \cos^2a\sin^2a}}dx = 
\frac{1}{2}\int_{\sin^2a}^{\cos^2a}\frac{\sqrt{1-u}}{\sqrt{u(1-u)-\cos^2a\sin^2a}} \\= 
&\frac{1}{2}\int_0^1\frac{\sqrt{(1-\sin^2a) - (\cos^2a-\sin^2a)t}}{\sqrt{t(1-t)}}dt = 
\frac{1}{2}\cos a\int_0^1\frac{\sqrt{1-(1-\tan^2a)t}}{\sqrt{t(1-t)}}dt 
\\=& \cos a\int_0^1\frac{\sqrt{1-(1-\tan^2a)y^2}}{\sqrt{1-y^2}} = \cos aE(\sqrt{1-\tan^2a}).
\end{split}
\end{equation*}

Here the following changes of variables were used, 
$$
\cos\varphi = x,\quad x^2=u, \quad u=(\cos^2a-\sin^2a)t+\sin^2a,\quad t=y^2.
$$ 
Since the maps $\theta\mapsto\theta+\theta_0$ and $\theta\mapsto\theta_0-\theta$ are isometries, the length of the 
geodesic $\pi(O_{p/q})$ is equal to $4\pi q\cos aE(\sqrt{1-\tan^2a})$. By Proposition 13 from the 
paper~\cite{PenskoiOtsuki}, $\Lambda_{2p-1}(O_{p/q})$ is equal to the doubled length of the geodesic $\pi(O_{p/q})$.
\end{proof} 
\begin{proposition}
For $k\in[0,1]$ one has the following inequality,
$$
K(k) - \frac{2}{2-k^2}E(k)\geqslant 0.
$$
\label{MainLemma}
\end{proposition}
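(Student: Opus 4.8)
The plan is to reduce the inequality to the monotonicity of an auxiliary function. Since $2-k^2\geqslant 1>0$ on $[0,1]$, the claimed inequality is equivalent to $f(k):=(2-k^2)K(k)-2E(k)\geqslant 0$. First I would record the boundary value $f(0)=2\cdot\frac{\pi}{2}-2\cdot\frac{\pi}{2}=0$, so that it suffices to prove $f$ is nondecreasing on $[0,1)$; the endpoint $k=1$ is automatic since $K(1)=+\infty$ while $E(1)=1$.

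Next I would differentiate using the classical formulas $E'(k)=\frac{E(k)-K(k)}{k}$ and $K'(k)=\frac{E(k)-(1-k^2)K(k)}{k(1-k^2)}$. Substituting these into $f'(k)=-2kK(k)+(2-k^2)K'(k)-2E'(k)$ and collecting terms over the common denominator $k(1-k^2)$, the contributions simplify considerably: the constant terms in the $K$-coefficient cancel, and one is left with
$$
f'(k)=\frac{k\bigl(E(k)-(1-k^2)K(k)\bigr)}{1-k^2}.
$$
This algebraic reduction is the computational heart of the argument; it is routine but the bookkeeping is the step most likely to hide sign errors, so I would carry it out with care.

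It then remains to show that the factor $E(k)-(1-k^2)K(k)$ is nonnegative. Here I would work directly with the integral representations, subtracting the two integrands:
$$
E(k)-(1-k^2)K(k)=\int_0^1\frac{(1-k^2x^2)-(1-k^2)}{\sqrt{1-x^2}\sqrt{1-k^2x^2}}\,dx=k^2\int_0^1\frac{\sqrt{1-x^2}}{\sqrt{1-k^2x^2}}\,dx\geqslant 0.
$$
Consequently $f'(k)\geqslant 0$ on $[0,1)$, so that $f(k)\geqslant f(0)=0$, which is exactly the desired inequality, with equality only at $k=0$.

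The main obstacle is purely the simplification of $f'$ into the factored form above; once that is in hand everything else is immediate. I note for completeness that an alternative is to write the difference of the two sides as the single integral $k^2\int_0^1\frac{2x^2-1}{\sqrt{1-x^2}\sqrt{1-k^2x^2}}\,dx$, but the sign change of $2x^2-1$ at $x=1/\sqrt{2}$ makes its positivity harder to read off than the monotonicity argument, so I would prefer the approach through $f'$.
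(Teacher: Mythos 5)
Your proof is correct, but it follows a genuinely different route from the paper's. You clear the denominator, set $f(k)=(2-k^2)K(k)-2E(k)$, and deduce $f(k)\geqslant f(0)=0$ from $f'\geqslant 0$; the differentiation via the standard formulas for $E'$ and $K'$ does collapse, after the bookkeeping you flag, to $f'(k)=k\bigl(E(k)-(1-k^2)K(k)\bigr)/(1-k^2)$, and the nonnegativity of $E(k)-(1-k^2)K(k)$ follows exactly as you write by subtracting the integrands. The paper instead takes precisely the ``single integral'' road you set aside at the end: it writes
$$
K(k)-\frac{2}{2-k^2}E(k)=\frac{k^2}{2-k^2}\int_0^{\pi/2}\frac{2\sin^2\theta-1}{\sqrt{1-k^2\sin^2\theta}}\,d\theta
$$
and handles the sign change of $2\sin^2\theta-1$ at $\theta=\pi/4$ by replacing the denominator on each piece by its crossover value $\sqrt{1-k^2/2}$: this decreases the integrand where it is negative is false---rather, it gives a valid lower bound on both pieces, since the true denominator exceeds $\sqrt{1-k^2/2}$ exactly where the numerator is negative and is smaller exactly where the numerator is positive. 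The two bounds then sum to $-\frac{1}{\sqrt{1-k^2/2}}\int_0^{\pi/2}\cos 2\theta\,d\theta=0$. So the obstacle you judged ``harder to read off'' is dispatched by a one-line comparison. Your version buys a computation-free sign analysis at the cost of invoking the derivative formulas for elliptic integrals (which the paper records anyway for later use, so nothing foreign is imported); the paper's version is self-contained from the integral definitions and avoids any differentiation. Both arguments in fact give strict inequality for $k\in(0,1]$.
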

\begin{proof}
Let us expand the left hand side using the definitions of $E$ and $K$,
\begin{equation*}
\begin{split}
K(k) - \frac{2}{2-k^2}E(k) = &\int_0^{\pi/2}\frac{d\theta}{\sqrt{1-k^2\sin^2\theta}} - 
\frac{2}{2-k^2}\int_0^{\pi/2}\sqrt{1-k^2\sin^2\theta}\, d\theta \\=&\frac{k^2}{2-k^2}\int
_0^{\pi/2}\frac{2\sin^2\theta - 1}{\sqrt{1-k^2\sin^2\theta}}d\theta.
\end{split}
\end{equation*}

Since the integrand is negative on $(0,\pi/4)$ and positive on $(\pi/4,\pi/2)$, one has
\begin{equation*}
\begin{split}
\int_0^{\pi/2}&\frac{2\sin^2\theta-1}{\sqrt{1-k^2\sin^2\theta}}d\theta = \int_0^{\pi/4}
\frac{2\sin^2\theta-1}{\sqrt{1-k^2\sin^2\theta}}d\theta + \int_{\pi/4}^{\pi/2}\frac{2\sin^2\theta-1}
{\sqrt{1-k^2\sin^2\theta}}d\theta\\ &\geqslant
\int_0^{\pi/4}\frac{2\sin^2\theta-1}{\sqrt{1-k^2/2}} + \int_{\pi/4}^{\pi/2}
\frac{2\sin^2\theta-1}{\sqrt{1-k^2/2}} \\&= -\frac{1}{\sqrt{1-k^2/2}}\int_0^{\pi/2}\cos 2\theta d\theta = 0.
\end{split}
\end{equation*}
\end{proof}

Let us introduce the notation
$$
\Phi(a) = \cos aE\left(\sqrt{1-\tan^2a}\right).
$$
\begin{proposition} The function $\Phi(a)$ is non-decreasing and $\Phi'(a)<1/2$ for any $a\in \left(0,\dfrac{\pi}{4}\right)$. 
In particular, $1 = \Phi(0) \leqslant \Phi(a) \leqslant \Phi\left(\pi/4\right) = \pi/(2\sqrt{2})$.
\label{PhiProp}
\end{proposition}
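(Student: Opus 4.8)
The plan is to compute $\Phi'(a)$ explicitly, deduce the monotonicity directly from Proposition~\ref{MainLemma}, and then bound the derivative above by $1/2$ by reusing the integral representation that already appears inside the proof of that proposition.

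First I would introduce the shorthand $k = \sqrt{1-\tan^2 a}$ and record the elementary identities that allow one to pass freely between $a$ and $k$: since $2-k^2 = 1+\tan^2 a = \sec^2 a$, we have $\cos^2 a = 1/(2-k^2)$, $\sin^2 a = (1-k^2)/(2-k^2)$, and $\cos^2 a - \sin^2 a = k^2/(2-k^2)$. Differentiating $k^2 = 1-\tan^2 a$ gives $k\,k'(a) = -\tan a\,\sec^2 a$, and combining this with the classical formula $E'(k) = (E(k)-K(k))/k$, the product rule applied to $\Phi(a) = \cos a\,E(k)$, and the identity $\cos a\tan a\sec^2 a = \sin a(2-k^2)$ should yield after a short simplification
$$
\Phi'(a) = \frac{\sin a}{k^2}\bigl[(2-k^2)K(k) - 2E(k)\bigr].
$$
The monotonicity is then immediate: rewriting the bracket as $(2-k^2)\bigl(K(k)-\tfrac{2}{2-k^2}E(k)\bigr)$ and using $2-k^2>0$ together with Proposition~\ref{MainLemma} shows it is nonnegative, while $\sin a>0$ on $(0,\pi/4)$, so $\Phi'(a)\geqslant 0$.

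For the bound $\Phi'(a)<1/2$, which I expect to be the crux, I would invoke the integral identity established inside the proof of Proposition~\ref{MainLemma}, namely $(2-k^2)K(k)-2E(k) = k^2\int_0^{\pi/2}\frac{2\sin^2\theta-1}{\sqrt{1-k^2\sin^2\theta}}\,d\theta$. Substituting $\sin a = \sqrt{(1-k^2)/(2-k^2)}$ then gives
$$
\Phi'(a) = \sqrt{\tfrac{1-k^2}{2-k^2}}\,\int_0^{\pi/2}\frac{2\sin^2\theta-1}{\sqrt{1-k^2\sin^2\theta}}\,d\theta.
$$
Because the integrand is negative on $(0,\pi/4)$, discarding that part only increases the integral; on $(\pi/4,\pi/2)$ I would bound $\frac{1}{\sqrt{1-k^2\sin^2\theta}}\leqslant\frac{1}{\sqrt{1-k^2}}$ and use $\int_{\pi/4}^{\pi/2}(2\sin^2\theta-1)\,d\theta = \tfrac12$. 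The key cancellation is that the resulting factor $\frac{1}{\sqrt{1-k^2}}$ is exactly absorbed by the prefactor $\sqrt{1-k^2}$, leaving
$$
\Phi'(a)\leqslant\frac{1}{2\sqrt{2-k^2}}<\frac12,
$$
where the final strict inequality holds since $a\in(0,\pi/4)$ forces $k\in(0,1)$ and hence $2-k^2>1$. The delicate point is precisely the sharpness of the constant: as $a\to 0+$ one has $k\to 1$ and the bound degenerates to $1/2$, so any cruder estimate of the integral would fail to deliver the stated strict inequality.

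Finally, evaluating at the endpoints gives $\Phi(0)=E(1)=1$ and $\Phi(\pi/4)=\tfrac{\sqrt2}{2}E(0)=\pi/(2\sqrt2)$; since $\Phi$ is continuous on $[0,\pi/4]$ and non-decreasing, the asserted two-sided bound $1\leqslant\Phi(a)\leqslant\pi/(2\sqrt2)$ follows at once.
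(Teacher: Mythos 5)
Your proposal is correct and follows essentially the same route as the paper: the same derivative formula $\Phi'(a)=\frac{\sin a}{k^2}\bigl[(2-k^2)K(k)-2E(k)\bigr]$, monotonicity via Proposition~\ref{MainLemma}, and the bound $\Phi'(a)\leqslant\frac{1}{2}\cos a$ obtained by discarding the negative part of the integral on $(0,\pi/4)$ and bounding the denominator by $\sqrt{1-k^2}$ on $(\pi/4,\pi/2)$. The only cosmetic difference is that you reuse the integral identity from the proof of Proposition~\ref{MainLemma} explicitly, whereas the paper rederives the integral representation in place; the content is identical.
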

\begin{corollary} One has
\begin{equation}
\label{inOtsuki}
4\sqrt{2}\pi^2 q\geqslant\Lambda_{2p-1}(O_{p/q})\geqslant 8\pi q.
\end{equation}
\label{ineqOtsuki}
\end{corollary}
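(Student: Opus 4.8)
The plan is to read off the corollary directly from the exact formula for $\Lambda_{2p-1}(O_{p/q})$ combined with the two-sided bound on $\Phi$ already established. First I would invoke Proposition~\ref{valueOtsuki}, which for the value $a\in\left(0,\pi/4\right)$ determined by $\Omega(a)=p\pi/q$ (whose existence and uniqueness is guaranteed by Proposition~\ref{PenskoiProp}) identifies
$$
\Lambda_{2p-1}(O_{p/q}) = 8\pi q\cos aE\left(\sqrt{1-\tan^2a}\right) = 8\pi q\,\Phi(a).
$$
Since $8\pi q>0$, this reduces the entire statement to estimating the single scalar factor $\Phi(a)$.

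Next I would apply Proposition~\ref{PhiProp}, whose ``in particular'' clause states precisely that $1 = \Phi(0)\leqslant\Phi(a)\leqslant\Phi\left(\pi/4\right)=\pi/(2\sqrt{2})$ for every $a$ in the relevant interval. Multiplying these inequalities by the positive constant $8\pi q$ yields at once
$$
8\pi q\leqslant\Lambda_{2p-1}(O_{p/q})\leqslant 8\pi q\cdot\frac{\pi}{2\sqrt{2}}.
$$
The lower estimate is already the right-hand inequality of~(\ref{inOtsuki}). For the upper estimate I would simplify the right-hand side, $8\pi q\cdot\frac{\pi}{2\sqrt{2}}=\frac{4\pi^2 q}{\sqrt{2}}=2\sqrt{2}\,\pi^2 q$, and then note that this sharp bound is in particular dominated by the looser constant $4\sqrt{2}\,\pi^2 q$ recorded in the corollary; this completes both inequalities of~(\ref{inOtsuki}).

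I do not expect a genuine obstacle here, since all of the analytic work has been front-loaded into Proposition~\ref{PhiProp} (whose monotonicity and derivative estimate $\Phi'<1/2$ in turn rest on the elliptic-integral inequality of Proposition~\ref{MainLemma}). The only points deserving a word of care are that $a$ ranges over the \emph{open} interval $\left(0,\pi/4\right)$, so the endpoint values of $\Phi$ are reached through monotonicity and continuity rather than by direct substitution, and the trivial arithmetic verification that the sharp upper bound $2\sqrt{2}\,\pi^2 q$ indeed lies below the stated constant $4\sqrt{2}\,\pi^2 q$.
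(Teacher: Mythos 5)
Your proof is correct and follows exactly the route the paper intends: the corollary is stated immediately after Proposition~\ref{PhiProp} with no separate argument, precisely because it is the combination of $\Lambda_{2p-1}(O_{p/q})=8\pi q\,\Phi(a)$ from Proposition~\ref{valueOtsuki} with the bounds $1\leqslant\Phi(a)\leqslant\pi/(2\sqrt{2})$. Your side remark that the computation actually yields the sharper upper bound $2\sqrt{2}\,\pi^2 q$, of which the stated $4\sqrt{2}\,\pi^2 q$ is a weakening, is accurate and handled correctly.
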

\textbf{Remark.} Let us remark that during the preparation of the manuscript inequality~(\ref{inOtsuki}) appeared in the paper~\cite{HuSong}. 
\begin{proof}[Proof of Proposition~\ref{PhiProp}.]
Let us recall the following formulae for the derivatives of elliptic integrals,
\begin{equation}
\label{dedk}
\frac{dE(k)}{dk} = \frac{E(k)-K(k)}{k}\,,\qquad \frac{dK(k)}{dk} = \frac{E(k)}{k(1-k^2)} - \frac{K(k)}{k},
\end{equation}
\begin{equation}
\label{dpi}
\begin{split}
\frac{\partial\Pi(n,k)}{\partial n} = \frac{1}{2(k^2-n)(n-1)}&\left(E(k) +\frac{(k^2-n)}{n}K(k) + \frac{(n^2-k^2)}{n}\Pi(n,k)\right), \\
\frac{\partial\Pi(n,k)}{\partial k} = &\frac{k}{n-k^2}\left(\frac{E(k)}{k^2-1} + \Pi(n,k)\right).
\end{split}
\end{equation}
 
Let us introduce a notation $\beta = \sqrt{1-\tan^2a}$. One obtains
\begin{equation}
\label{dphi}
\begin{split}
\Phi'(a) &= \cos a\left(-2\tan a \frac{E(\beta)-K(\beta)}{2\cos^2a(1-\tan^2a)}\right) - \sin a E(\beta)\\ &= 
-\sin a\left(E(\beta) + \frac{E(\beta)-K(\beta)}{\cos^2a - \sin^2a}\right)\\ &= \frac{\sqrt{(1-\beta^2)(2-\beta^2)}}{\beta^2}\left(K(\beta) - \frac{2}{2-\beta^2}E(\beta)\right).
\end{split}
\end{equation} 
Now the monotonicity of the function $\Phi(a)$ follows from Proposition~\ref{MainLemma}.

For the proof of the second part, let us go back to formula~(\ref{dphi}). One has
\begin{equation*}
\begin{split}
\Phi'(a) =& -\sin a\left(\frac{2\cos^2aE(\beta)-K(\beta)}{\cos^2a-\sin^2a}\right) \\=& -\frac{\sin a}{\cos^2a-\sin^2a}
\int_0^{\pi/2}\frac{2\cos^2a(1-\beta^2\sin^2\theta)-1}{\sqrt{1-\beta^2\sin^2\theta}}d\theta\\ =& 
\sin a\int_0^{\pi/2}\frac{2\sin^2\theta-1}{\sqrt{1-\beta^2\sin^2\theta}}d\theta\leqslant
\sin a\int_{\pi/4}^{\pi/2}\frac{2\sin^2\theta-1}{\sqrt{1-\beta^2}}d\theta\\ =& 
-\cos a \int_{\pi/4}^{\pi/2}\cos 2\theta d\theta = \cos a\frac{\sin 2\theta}{2}\Bigl|_{\pi/2}
^{\pi/4}\leqslant \frac{1}{2}.
\end{split}
\end{equation*}

This finishes the proof of Proposition~\ref{PhiProp}.
\end{proof}
\begin{proposition}
The function $(2/\pi)\Omega(a) - \Phi(a)$ is increasing on the interval $\left(0,\pi/4\right)$.
\end{proposition}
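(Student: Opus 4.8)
The plan is to show that the derivative of $(2/\pi)\Omega(a)-\Phi(a)$ is positive on $\left(0,\pi/4\right)$, i.e. that $\frac{2}{\pi}\Omega'(a)>\Phi'(a)$. Since formula~(\ref{dphi}) already records $\Phi'(a)$ in closed form, the first task is to obtain an analogous closed form for $\Omega'(a)$. To this end I would first rewrite $\Omega(a)$ as an elliptic integral of the third kind. Applying to the integral defining $\Omega(a)$ exactly the chain of substitutions $\cos\varphi=x$, $x^2=u$, $u=(\cos^2 a-\sin^2 a)t+\sin^2 a$, $t=y^2$ used in the proof of Proposition~\ref{valueOtsuki}, the factor $1/\cos\varphi$ turns into a factor $1/(1-ny^2)$, and one obtains
\begin{equation*}
\Omega(a)=\frac{1}{\sin a}\,\Pi\!\left(1-\cot^2 a,\ \sqrt{1-\tan^2 a}\right).
\end{equation*}

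Writing $\beta=\sqrt{1-\tan^2 a}$ and $n=1-\cot^2 a$, and recording $\frac{dn}{da}$ and $\frac{d\beta}{da}$, I would differentiate this expression using the formulas~(\ref{dpi}) for $\partial\Pi/\partial n$ and $\partial\Pi/\partial k$. The main obstacle is the bookkeeping in this differentiation, and it is the one computation that must be carried out with care. The key point is that the three contributions containing $\Pi$ (one from differentiating the prefactor $1/\sin a$, one from $\partial\Pi/\partial n$, one from $\partial\Pi/\partial k$) cancel identically: after substituting $n-1=-\cot^2 a$, $\beta^2-1=-\tan^2 a$ and $\beta^2-n=(\cos^2 a-\sin^2 a)/(\sin^2 a\cos^2 a)$, the total coefficient of $\Pi$ collapses to zero. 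What survives is a clean expression in $E$ and $K$ only. Using $\cos^2 a=1/(2-\beta^2)$, so that $\frac{1}{(\cos^2 a-\sin^2 a)\cos a}=(2-\beta^2)^{3/2}/\beta^2$, this reads
\begin{equation*}
\Omega'(a)=\frac{(2-\beta^2)^{3/2}}{\beta^2}\left(K(\beta)-\frac{2}{2-\beta^2}E(\beta)\right).
\end{equation*}
In particular $\Omega'(a)\geqslant 0$ by Proposition~\ref{MainLemma}, consistent with the monotonicity of $\Omega$.

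Finally I would compare the two derivatives. Formula~(\ref{dphi}) shows that $\Phi'(a)$ carries the very same factor, namely $\Phi'(a)=\frac{\sqrt{(1-\beta^2)(2-\beta^2)}}{\beta^2}\bigl(K(\beta)-\frac{2}{2-\beta^2}E(\beta)\bigr)$. This common factor is strictly positive for $\beta\in(0,1)$, since the comparison in the proof of Proposition~\ref{MainLemma} is strict away from $k=0$. Hence on $\left(0,\pi/4\right)$ the sign of $\frac{2}{\pi}\Omega'(a)-\Phi'(a)$ coincides with the sign of
\begin{equation*}
\frac{2}{\pi}\cdot\frac{(2-\beta^2)^{3/2}}{\beta^2}-\frac{\sqrt{(1-\beta^2)(2-\beta^2)}}{\beta^2}.
\end{equation*}

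Cancelling the positive factor $\sqrt{2-\beta^2}/\beta^2$, it remains to verify the elementary one-variable inequality $\frac{2}{\pi}(2-\beta^2)>\sqrt{1-\beta^2}$ for $\beta\in(0,1)$. Setting $w=\beta^2$, the function $w\mapsto\frac{2}{\pi}(2-w)-\sqrt{1-w}$ is convex on $[0,1]$ (its second derivative is $\frac14(1-w)^{-3/2}>0$), so its minimum is attained at the interior critical point $1-w=\pi^2/16$, where a direct evaluation yields a positive value; hence the inequality holds throughout and the proposition follows. Alternatively, one may square both sides and check that the resulting upward-opening quadratic in $w$ stays positive on $[0,1]$.
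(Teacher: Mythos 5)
Your proposal is correct and follows essentially the same route as the paper: the same representation $\Omega(a)=\frac{1}{\sin a}\Pi\left(-\frac{\cos 2a}{\sin^2 a},\beta\right)$ (note $-\cos 2a/\sin^2 a=1-\cot^2 a$, so your parameter agrees with the paper's), the same closed form for $\Omega'(a)$, and the same reduction to the elementary inequality $\frac{2}{\pi}(2-\beta^2)>\sqrt{1-\beta^2}$ after factoring out $\frac{\sqrt{2-\beta^2}}{\beta^2}\left(K(\beta)-\frac{2}{2-\beta^2}E(\beta)\right)$. The only differences are cosmetic and in your favor: you sketch a derivation of the $\Pi$-representation (the paper cites Hu--Song), you justify the elementary inequality by convexity (the paper merely asserts it), and you note the strict positivity of the common factor for $\beta\in(0,1)$, which the statement of Proposition~\ref{MainLemma} alone does not give.
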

\begin{proof}
In the paper~\cite{HuSong} the following formula was proved,
$$
\Omega(a) = \frac{1}{\sin a}\Pi\left(-\frac{\cos 2a}{\sin^2a},\sqrt{1-\tan^2a}\right).
$$
Using formulae~(\ref{dpi}) one obtains the following formula,
$$
\frac{d\Omega(a)}{da} = \frac{1}{\cos a\cos 2a}K\left(\sqrt{1-\tan^2a}\right) - \frac{2\cos a}{\cos 2a}E\left(\sqrt{1-\tan^2a}\right).
$$

Let us recall the notation $\beta(a) = \sqrt{1 - \tan^2a}$. Then  one has
\begin{equation}
\label{domega}
\Omega'(a) = \frac{(2-\beta^2)^{\frac{3}{2}}}{\beta^2}\left(K(\beta) - \frac{2}{2-\beta^2}E(\beta)\right),
\end{equation}
$$
\Omega(a) = \sqrt{\frac{2-\beta^2}{1-\beta^2}}\Pi\left(-\frac{\beta^2}{1-\beta^2},\beta\right).
$$

Moreover, by formula~(\ref{dphi}) one has
$$
\Phi'(a) = \frac{\sqrt{(1-\beta^2)(2-\beta^2)}}{\beta^2}\left(K(\beta) - \frac{2}{2-\beta^2}E(\beta)\right).
$$
The inequality $(2/\pi)(2-\beta^2) - \sqrt{1-\beta^2}>0$ and Proposition~\ref{MainLemma} imply the inequality
$$
\frac{2}{\pi}\Omega'(a) - \Phi'(a) = \frac{\sqrt{2-\beta^2}}{k^2}\left(K(\beta) - \frac{2}{2-\beta^2}E(\beta)\right)\left(\frac{2}{\pi}(2-\beta^2) - \sqrt{1-\beta^2}\right) > 0.
$$
\end{proof}
\begin{corollary}
For $a\in\left[1/5,\pi/4\right]$ one has
$$
\frac{2}{\pi}\Omega(a) - \Phi(a) > \frac{2\sqrt{3}-\pi}{3\sqrt{3}}.
$$
\label{cor1}
\end{corollary}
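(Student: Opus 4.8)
The plan is to exploit the monotonicity just established. By the preceding proposition, the function $g(a):=\frac{2}{\pi}\Omega(a)-\Phi(a)$ is strictly increasing on $\left(0,\pi/4\right)$. Consequently, on the closed subinterval $\left[1/5,\pi/4\right]$ the infimum of $g$ is attained at the left endpoint, so that $g(a)\geqslant g(1/5)$ for every $a\in\left[1/5,\pi/4\right]$. Therefore the whole family of inequalities collapses to the single strict inequality at $a=1/5$, and it suffices to establish
$$
\frac{2}{\pi}\Omega(1/5)-\Phi(1/5) > \frac{2\sqrt{3}-\pi}{3\sqrt{3}}.
$$
It is convenient first to rewrite the target constant as $\frac{2\sqrt{3}-\pi}{3\sqrt{3}}=\frac{6-\sqrt{3}\,\pi}{9}\approx 0.0621$, so that the required margin is explicit.

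Next I would evaluate the two terms of $g(1/5)$. Setting $\beta=\sqrt{1-\tan^2(1/5)}$, the definition of $\Phi$ gives $\Phi(1/5)=\cos(1/5)\,E(\beta)$, while the closed form $\Omega(a)=\sqrt{\tfrac{2-\beta^2}{1-\beta^2}}\,\Pi\!\left(-\tfrac{\beta^2}{1-\beta^2},\beta\right)$ derived above evaluates $\Omega(1/5)$. I would then enclose these elliptic integrals between explicit rational bounds and check the displayed inequality; the computed value of $g(1/5)$ is near $0.22$, comfortably above the required $0.062$.

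The main obstacle is the rigorous numerical control of the elliptic integrals at $a=1/5$. Since $\beta\approx 0.979$ is close to $1$, both $E(\beta)$ and especially the third-kind integral $\Pi\!\left(-\beta^2/(1-\beta^2),\beta\right)$, whose characteristic is large in absolute value, lie near the singular regime, so soft estimates are delicate. Indeed, the crude monotonicity bounds $\Omega(1/5)>\pi/2$ and $\Phi(1/5)<\pi/(2\sqrt{2})$ (from Proposition~\ref{PhiProp} and Proposition~\ref{PenskoiProp}) are too weak, as they yield only a negative lower bound for $g(1/5)$; the verification genuinely requires accurate two- or three-digit enclosures rather than soft estimates. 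Because the true gap ($\approx 0.22$ versus $\approx 0.062$) is large, such modest enclosures suffice. Alternatively, one may avoid $\Pi$ altogether by writing $\Omega(1/5)=\Omega(\pi/4)-\int_{1/5}^{\pi/4}\Omega'(a)\,da$ with $\Omega(\pi/4)=\pi/\sqrt{2}$ (Proposition~\ref{PenskoiProp}) and bounding the integrand via the explicit formula~(\ref{domega}), which expresses $\Omega'$ through $K$ and $E$ only.
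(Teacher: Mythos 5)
Your proposal is correct and follows essentially the same route as the paper: reduce to the left endpoint $a=1/5$ via the monotonicity of $\frac{2}{\pi}\Omega(a)-\Phi(a)$ established in the preceding proposition, then verify the single numerical inequality at $a=1/5$ using values of the elliptic integrals (the paper simply cites the tables in~\cite{Friedman} for this step, while you spell out how the enclosures could be obtained). Your numerical sanity checks --- that the soft bounds $\Omega(1/5)>\pi/2$, $\Phi(1/5)<\pi/(2\sqrt{2})$ are insufficient and that the actual value $\approx 0.22$ comfortably exceeds $\frac{2\sqrt{3}-\pi}{3\sqrt{3}}\approx 0.062$ --- are consistent with the computation.
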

\begin{proof}
Using the tables of elliptic integrals, e.g. the book~\cite{Friedman}, one obtains the inequality
$$
\frac{2}{\pi}\Omega\left(\frac{1}{5}\right) - \Phi\left(\frac{1}{5}\right) > \frac{2\sqrt{3}-\pi}{3\sqrt{3}}.
$$ 
The rest of the proof follows from the monotonicity of the function on the left hand side.
\end{proof}
\begin{proposition} For $\xi\in\left[0,1/5\right]$ one has
$$
\Omega'(\xi) > \frac{\pi}{4}\left(\frac{\pi}{\sqrt{3}}-1\right)^{-1}
$$
\label{prop1}
\end{proposition}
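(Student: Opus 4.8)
The plan is to start from the integral representation of $\Omega'$ that is implicit in formula~\eqref{domega} together with the computation in the proof of Proposition~\ref{MainLemma}. Writing $\beta=\beta(a)=\sqrt{1-\tan^2a}$ and substituting the identity
$$K(\beta)-\frac{2}{2-\beta^2}E(\beta)=\frac{\beta^2}{2-\beta^2}\int_0^{\pi/2}\frac{2\sin^2\theta-1}{\sqrt{1-\beta^2\sin^2\theta}}\,d\theta$$
(established inside the proof of Proposition~\ref{MainLemma}) into~\eqref{domega}, the factors $\beta^{-2}$ and $2-\beta^2$ cancel and I obtain the clean formula
$$\Omega'(a)=\sqrt{2-\beta^2}\int_0^{\pi/2}\frac{2\sin^2\theta-1}{\sqrt{1-\beta^2\sin^2\theta}}\,d\theta.$$
Since $a\in(0,\pi/4)$ forces $0<\beta\leqslant1$, we have $\sqrt{2-\beta^2}\geqslant1$, while $J(\beta):=\int_0^{\pi/2}(2\sin^2\theta-1)(1-\beta^2\sin^2\theta)^{-1/2}\,d\theta$ is nonnegative by Proposition~\ref{MainLemma}. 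Hence $\Omega'(a)\geqslant J(\beta)$, and it suffices to bound $J(\beta)$ from below on the relevant range of $\beta$.

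As $a$ runs over $[0,1/5]$ the modulus $\beta=\sqrt{1-\tan^2a}$ decreases from $1$ to $\beta_0:=\sqrt{1-\tan^2(1/5)}$, so $\beta\in[\beta_0,1]$. The key step is to show that $J$ is \emph{increasing} in $\beta$, which then yields $J(\beta)\geqslant J(\beta_0)$ throughout. Differentiating under the integral sign gives
$$J'(\beta)=\int_0^{\pi/2}(2\sin^2\theta-1)\,w(\theta)\,d\theta,\qquad w(\theta)=\frac{\beta\sin^2\theta}{(1-\beta^2\sin^2\theta)^{3/2}}.$$
The weight $w$ is a product of two positive increasing functions of $\theta$ on $[0,\pi/2]$, hence positive and increasing, whereas $h(\theta):=2\sin^2\theta-1$ is increasing, vanishes at $\theta=\pi/4$ and satisfies $\int_0^{\pi/2}h(\theta)\,d\theta=0$. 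Therefore $h(\theta)\bigl(w(\theta)-w(\pi/4)\bigr)\geqslant0$ pointwise (both factors share the sign of $\theta-\pi/4$), and integrating this inequality together with $\int_0^{\pi/2}h(\theta)w(\pi/4)\,d\theta=0$ gives $J'(\beta)\geqslant0$, with strict inequality since $w$ is non-constant. This Chebyshev-type correlation argument is the only genuinely delicate point, and it is the step I expect to be the main obstacle.

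It then remains to verify the single numerical inequality $J(\beta_0)>\frac{\pi}{4}\bigl(\frac{\pi}{\sqrt3}-1\bigr)^{-1}$. Rewriting $J(\beta_0)=\beta_0^{-2}\bigl((2-\beta_0^2)K(\beta_0)-2E(\beta_0)\bigr)$ and using the tables of elliptic integrals (exactly as in Corollary~\ref{cor1}) with $\beta_0^2=1-\tan^2(1/5)\approx0.9589$, one finds $J(\beta_0)\approx1.07$, which comfortably exceeds $\frac{\pi}{4}\bigl(\frac{\pi}{\sqrt3}-1\bigr)^{-1}\approx0.965$. Combining the three steps, for every $\xi\in[0,1/5]$ we get $\Omega'(\xi)\geqslant J(\beta(\xi))\geqslant J(\beta_0)>\frac{\pi}{4}\bigl(\frac{\pi}{\sqrt3}-1\bigr)^{-1}$, which is the claim; at the endpoint $\xi=0$ the bound is in fact trivial since $J(1)=+\infty$. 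The virtue of this route is that the reduction $\Omega'\geqslant J\geqslant J(\beta_0)$ avoids any analysis of the product $\sqrt{2-\beta^2}\,J(\beta)$ and turns the proposition into a finite table lookup with a safe margin.
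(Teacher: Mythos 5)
Your proof is correct, and it takes a genuinely sharper route than the paper's. Both arguments start from formula~(\ref{domega}) and reduce the claim to a table lookup at the endpoint $a=1/5$, but the paper estimates the two pieces $\frac{(2-\beta^2)^{3/2}}{\beta^2}K(\beta)$ and $\frac{2\sqrt{2-\beta^2}}{\beta^2}E(\beta)$ separately, using only the monotonicity of $K$, $E$ and $\beta(a)$; you instead fold the whole bracket into the single integral $J(\beta)=\int_0^{\pi/2}(2\sin^2\theta-1)(1-\beta^2\sin^2\theta)^{-1/2}\,d\theta$ via the identity from the proof of Proposition~\ref{MainLemma}, so that $\Omega'(a)=\sqrt{2-\beta^2}\,J(\beta)\geqslant J(\beta)$, and then prove that $J$ itself is increasing in $\beta$ by a Chebyshev-type correlation argument (which is sound: $h=2\sin^2\theta-1$ has zero mean on $[0,\pi/2]$ and shares the sign of $\theta-\pi/4$ with $w(\theta)-w(\pi/4)$; just note explicitly that differentiation under the integral sign is justified for $\beta<1$, while $J(1)=+\infty$ takes care of $a=0$). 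This difference is not cosmetic. The paper's termwise bound $K(\beta(1/5))-2\frac{2-\beta^2(1/5)}{\beta^2(1/5)}E(\beta(1/5))$ evaluates, with $\beta^2(1/5)\approx0.9589$, $K\approx3.00$, $E\approx1.05$, to about $0.72$, which does \emph{not} exceed the target $\frac{\pi}{4}(\pi/\sqrt3-1)^{-1}\approx0.965$; the final table-lookup inequality in the paper's proof is therefore false as written (it discards the factor $(2-\beta^2)^{3/2}/\beta^2\approx1.11$ in front of $K$ and inflates the coefficient of $E$ from $\sqrt{2-\beta^2}$ to $2-\beta^2$). Your quantity $J(\beta_0)=\beta_0^{-2}\bigl((2-\beta_0^2)K(\beta_0)-2E(\beta_0)\bigr)\approx1.067$ does clear the bar with a comfortable margin, so your argument not only differs from the paper's but actually repairs it; the proposition itself is true, since $\Omega'(1/5)=\sqrt{2-\beta_0^2}\,J(\beta_0)\approx1.09$.
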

\begin{proof}
By formula~(\ref{domega}) for $\xi\in\left[0,1/5\right]$ one has
\begin{equation*}
\begin{split}
\Omega'(\xi) &= \frac{(2-\beta(\xi)^2)^{\frac{3}{2}}}{\beta(\xi)^2}\left(K(\beta(\xi)) - \frac{2}{2-\beta(\xi)^2}E(\beta(\xi))\right)\\ &\geqslant K\left(\beta\left(\frac{1}{5}\right)\right) - 2\frac{2-\beta^2\left(1/5\right)}{\beta^2\left(1/5\right)}E\left(\beta\left(\frac{1}{5}\right)\right).
\end{split}
\end{equation*}
In the last inequality we used the facts that $K(k)$ is increasing function and $E(k)$ as well as $\beta(a)$ are decreasing functions. The table of the elliptic integrals in the book~\cite{Friedman} provides the inequality
$$
K\left(\beta\left(\frac{1}{5}\right)\right) - 2\frac{2-\beta^2\left(1/5\right)}{\beta^2\left(1/5\right)}E\left(\beta\left(\frac{1}{5}\right)\right)>\frac{\pi}{4}\left(\frac{\pi}{\sqrt{3}}-1\right)^{-1}
$$
which completes the proof.
\end{proof}
\begin{proof}[Proof of Proposition~\ref{OtsukiEstimateProp}] We want to prove that 
$$
8\pi\left(2p-2 + \frac{\pi}{\sqrt{3}}\right) > 8\pi q\Phi(a),
$$
where $\Omega(a) = p\pi/q$. This inequality is equivalent to the following one
$$
2\frac{p}{q} - \frac{2\sqrt{3}-\pi}{q\sqrt{3}} > \Phi(a).
$$
Since $\Omega(a) = p\pi/q$, it is sufficient to prove that 
\begin{equation}
\frac{2}{\pi}\Omega(a) - \Phi(a) > \frac{2\sqrt{3}-\pi}{q\sqrt{3}}.
\label{mainineq}
\end{equation}
Since $q\geqslant 3$, the application of Corollary~\ref{cor1} provides inequality~(\ref{mainineq}) for $a\in\left[1/5,\pi/4\right]$.
In order to prove inequality for $a\in\left[0,1/5\right]$ let us note that by Proposition~\ref{PhiProp} 
\begin{equation*}
\begin{split}
\frac{2}{\pi}\Omega(a) - \Phi(a) &= \frac{2}{\pi}(\Omega(a)-\Omega(0)) - (\Phi(a)-\Phi(0))\\ &= a\left(\frac{2}{\pi}\Omega'(\xi)-\Phi'(\eta)\right)\geqslant 
a\left(\frac{2}{\pi}\Omega'(\xi)-\frac{1}{2}\right)
\end{split}
\end{equation*}
for some $\xi,\eta\in(0,a)$. Moreover,
$$
\frac{1}{2q}\pi\leqslant\frac{2p-q}{2q}\pi = \frac{p}{q}\pi - \frac{1}{2}\pi = \Omega(a)-\Omega(0) = a\Omega'(\xi)
$$
or
$$
\frac{1}{q}<\frac{2a}{\pi}\Omega'(\xi).
$$
Therefore, inequality~(\ref{mainineq}) follows from the inequality
$$
\frac{2}{\pi}\Omega'(\xi) - \frac{1}{2} > \frac{2}{\pi}\left(2-\frac{\pi}{\sqrt{3}}\right)\Omega'(\xi)
$$
or the inequality
$$
\Omega'(\xi) > \frac{\pi}{4}\left(\frac{\pi}{\sqrt{3}}-1\right)^{-1}.
$$
The last inequality easily follows from Proposition~\ref{prop1}. 
\end{proof}
\section{Lawson surfaces} 
A Lawson tau-surface $\tau_{m,k}$ is an immersed surface in the sphere $\mathbb{S}^3$
\label{LawsonEstimate}
defined by the double-periodic immersion of $\mathbb{R}^2$ given by the formula
$$
(\cos mx \cos y,\sin mx\cos y, \cos kx\sin y,\sin kx\sin y).
$$

It was introduced by Lawson in the paper~\cite{Lawson}. He also proved that for each pair $\{m,k\}$, such that 
$m\geqslant k\geqslant 1$ and $(m,k)=1$, the surface $\tau_{m,k}$ is a distinct compact minimal surface in $\mathbb{S}^3$.
Let us assume that $(m,k)=1$ then if both $m$ and $k$ are odd then $\tau_{m,k}$ is a torus, we call it a Lawson torus. Otherwise $\tau_{m,k}$ is a Klein bottle, we call it a Lawson Klein bottle.
\begin{proposition}[Penskoi \cite{PenskoiLawson}] Let $\tau_{m,k}$ be a Lawson surface. Then the induced metric on $\tau_{m,k}$ is an extremal metric 
for the functional $\Lambda_j(M,g)$, where
\begin{equation}
j = 2\left[\frac{\sqrt{m^2+k^2}}{2}\right] + m + k - 1,
\label{j}
\end{equation}
$M = \mathbb{T}^2$ if both $m,k$ are odd and $M=\mathbb{K}$ otherwise.

The corresponding value of the functional 
is 
$$
\Lambda_j(\tau_{m,k}) = 8\pi mE\left(\frac{\sqrt{m^2-k^2}}{m}\right).
$$
\end{proposition}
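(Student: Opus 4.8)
The plan is to apply the El Soufi--Ilias theorem (Theorem~\ref{th1}) directly to the minimal immersion $\psi$ of $\tau_{m,k}$ into $\mathbb{S}^3$ constructed by Lawson. Minimality gives at once that the induced metric $\psi^*g_{can}$ is extremal for $\Lambda_{N(2)}$, so the whole statement reduces to two independent tasks: evaluating the functional, and showing that $N(2)$ equals the index $j$ from~(\ref{j}). I would settle the value first, as it is elementary, and leave the counting of $N(2)$ for last.

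For the value I would start by computing the induced metric. Differentiating the defining immersion one checks that the coordinate vector fields $\psi_x,\psi_y$ are orthogonal with $|\psi_y|^2=1$ and $|\psi_x|^2 = m^2\cos^2 y + k^2\sin^2 y$, so that
\begin{equation*}
\psi^*g_{can} = \rho(y)\,dx^2 + dy^2,\qquad \rho(y) = m^2\cos^2 y + k^2\sin^2 y.
\end{equation*}
The area of a fundamental domain then reduces to the one-variable integral of $\sqrt{\rho(y)} = m\sqrt{1-\tfrac{m^2-k^2}{m^2}\sin^2 y}$, which is an elliptic integral of the second kind with modulus $\sqrt{m^2-k^2}/m$; keeping track of the fundamental domain, a parallelogram identified into a torus when $m,k$ are both odd and into a Klein bottle otherwise, gives $\area(\tau_{m,k}) = 4\pi m E(\sqrt{m^2-k^2}/m)$. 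Since the restrictions of the ambient linear coordinates are eigenfunctions with eigenvalue $2$ (the general property of minimal immersions of surfaces into the unit sphere), one has $\lambda_{N(2)} = 2$, so $\Lambda_{N(2)} = 2\area(\tau_{m,k}) = 8\pi m E(\sqrt{m^2-k^2}/m)$; together with $N(2)=j$ below, this is the claimed value.

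To identify $N(2)$ with $j$ I would separate variables. Every eigenfunction on the quotient is a combination of modes $e^{inx}B(y)$ with $n\in\mathbb{Z}$, where the lattice forces the condition $B(y+\pi) = (-1)^n B(y)$, and the eigenvalue equation becomes the one-parameter Sturm--Liouville family
\begin{equation*}
-B'' - \frac{\rho'}{2\rho}B' + \frac{n^2}{\rho}B = \lambda B.
\end{equation*}
Because the effective potential $n^2/\rho$ is monotone increasing in $n^2$, the number $N_n(2)$ of eigenvalues below the threshold $\lambda=2$ in mode $n$ is non-increasing in $|n|$ and vanishes for $|n|$ large; oscillation theory (counting the nodal points of the relevant solution) then turns $N(2)=\sum_n N_n(2)$ into a finite, explicit count. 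The functions $e^{\pm imx}\cos y$ and $e^{\pm ikx}\sin y$, which realize the eigenvalue exactly $2$, serve as the anchors pinning down this threshold.

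The main obstacle is exactly this last count, and in particular the appearance of the floor term $2[\sqrt{m^2+k^2}/2]$. This demands sharp control of where each eigenvalue branch $\mu_p(n)$ crosses the value $2$ as $n$ varies. I expect the linear contribution $m+k-1$ to come from the modes carrying the $\cos y$- and $\sin y$-type lowest branches, while the floor term should emerge from the precise range of $n$ for which the lowest branch still lies below $2$, the relevant threshold $n\approx\sqrt{m^2+k^2}$ being the place where the minimum $n^2/m^2$ and maximum $n^2/k^2$ of the potential straddle the value $2$. Establishing these crossings rigorously --- rather than by a closed-form diagonalization, which is unavailable --- is where the real work lies, the torus/Klein bottle dichotomy entering only as bookkeeping through the parity condition on $B$.
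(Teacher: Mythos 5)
This proposition is not proved in the paper at all: it is quoted verbatim from Penskoi's paper \cite{PenskoiLawson}, so the only ``proof'' here is a citation. Your overall plan does match the strategy of that source --- apply Theorem~\ref{th1} to Lawson's minimal immersion, compute the area to get the value of the functional, and determine $N(2)$ by separation of variables --- and your metric and area computations are correct: $\psi^*g_{can}=(m^2\cos^2y+k^2\sin^2y)\,dx^2+dy^2$, the fundamental domain is half of $[0,2\pi]^2$ because $\psi(x+\pi,y+\pi)=\psi(x,y)$ when $m,k$ are odd (and similarly in the Klein bottle case), and $\area(\tau_{m,k})=4\pi mE(\sqrt{m^2-k^2}/m)$, whence $\Lambda_{N(2)}=8\pi mE(\sqrt{m^2-k^2}/m)$.

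The genuine gap is that the identification $N(2)=j$ with $j=2[\sqrt{m^2+k^2}/2]+m+k-1$ --- which is the entire nontrivial content of the proposition and the subject of \cite{PenskoiLawson} --- is never established. You correctly set up the Sturm--Liouville family in the mode $n$, and the monotonicity of the branches in $n^2$ together with $\mu_0(n)\geqslant n^2/m^2$ does make $N(2)=\sum_nN_n(2)$ a finite sum; but you then only describe what you \emph{expect}: that $m+k-1$ comes from certain branches and that the floor term comes from the range of $n$ for which the lowest branch lies below $2$. Your stated heuristic for the threshold is not even internally consistent --- the condition that $n^2/m^2$ and $n^2/k^2$ straddle $2$ gives the interval $\sqrt2\,k<n<\sqrt2\,m$, not the value $\sqrt{m^2+k^2}$ --- and no argument is offered that pins down the exact integer count, which is where the floor function and the parity bookkeeping actually matter. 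In Penskoi's proof this step requires reducing the separated equation (after a change of variable) to a Magnus--Winkler--Ince/Lam\'e-type equation and a careful analysis of where its periodic and antiperiodic eigenvalue branches cross $2$; without carrying out that analysis, or an equivalent one, the formula for $j$ remains unproved and the proposition is not established.
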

\begin{proposition}
Let $j$ be defined by formula~(\ref{j}). If $\tau_{m,k}$ is a Lawson torus, then
$$
\Lambda_j(\tau_{m,k}) < 8\pi\left(j-1+\frac{\pi}{\sqrt{3}}\right).
$$
If $\tau_{m,k}$ is a Klein bottle, then 
$$
\Lambda_j(\tau_{m,k}) < 8\pi(j-1)+12\pi E\left(\frac{2\sqrt{2}}{3}\right).
$$
\label{LawsonProp}
\end{proposition}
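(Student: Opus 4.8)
The plan is to reduce both inequalities to a single estimate on the quantity $mE(\sqrt{m^2-k^2}/m)$, which, up to the factor $8\pi$, equals $\Lambda_j(\tau_{m,k})$ by the preceding proposition. First I would rewrite this using the integral definition of $E$ given in the introduction. Substituting $x=\sin\theta$ and taking the modulus to be $\sqrt{m^2-k^2}/m$, one obtains
$$
mE\left(\frac{\sqrt{m^2-k^2}}{m}\right) = \int_0^{\pi/2}\sqrt{m^2\cos^2\theta + k^2\sin^2\theta}\,d\theta,
$$
that is, the quarter-perimeter of the ellipse with semi-axes $m$ and $k$. This representation is the crux, because it makes a crude but sufficiently sharp upper bound transparent.

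The second step is the elementary inequality $\sqrt{a^2+b^2}\leqslant a+b$ for $a,b\geqslant 0$, applied pointwise with $a=m\cos\theta$ and $b=k\sin\theta$. Equality holds only when $ab=0$, i.e. at the endpoints of $[0,\pi/2]$, and since $k\geqslant 1$ the integrand is strictly dominated on the open interval; integrating yields the strict inequality
$$
mE\left(\frac{\sqrt{m^2-k^2}}{m}\right) < \int_0^{\pi/2}(m\cos\theta + k\sin\theta)\,d\theta = m+k,
$$
so that $\Lambda_j(\tau_{m,k}) < 8\pi(m+k)$.

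The third step compares $m+k$ with the two claimed right-hand sides. Writing out formula~(\ref{j}) gives $j-1 = 2\left[\sqrt{m^2+k^2}/2\right] + m + k - 2$, hence whenever $m^2+k^2\geqslant 4$ one has $\left[\sqrt{m^2+k^2}/2\right]\geqslant 1$ and therefore $j-1\geqslant m+k$. Combined with the previous step this gives $\Lambda_j(\tau_{m,k}) < 8\pi(m+k)\leqslant 8\pi(j-1)$, which is strictly below both $8\pi(j-1+\pi/\sqrt3)$ and $8\pi(j-1)+12\pi E(2\sqrt2/3)$, the added constants $8\pi\cdot\pi/\sqrt3$ and $12\pi E(2\sqrt2/3)$ being positive. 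For Klein bottles the smallest admissible pair is $(2,1)$, so $m^2+k^2\geqslant 5$ always and the entire Klein bottle case is settled; for tori the condition $m^2+k^2\geqslant 4$ covers every pair except $(m,k)=(1,1)$.

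The remaining torus case $(m,k)=(1,1)$ is exactly the Clifford torus, and this is the one genuine obstacle: the floor term vanishes, $j-1=0$, and the generous margin of the general argument collapses, so one must use the numerical value of the constant rather than the floor estimate. Here $\Lambda_1(\tau_{1,1}) = 8\pi E(0) = 4\pi^2$, and the required bound $4\pi^2 < 8\pi^2/\sqrt3$ is simply $\tfrac12 < \tfrac{1}{\sqrt3}$, which holds. Thus the only delicate point is verifying this single boundary case by hand; everything else follows from the ellipse-perimeter estimate with room to spare.
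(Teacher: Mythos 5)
Your proof is correct, and its core estimate is the same one the paper uses: the bound $mE\bigl(\sqrt{m^2-k^2}/m\bigr)\leqslant m+k$, which you derive from $\sqrt{a^2+b^2}\leqslant a+b$ under the ellipse-perimeter integral and which the paper derives from the equivalent inequality $E(x)\leqslant 1+\sqrt{1-x^2}$ (the same subadditivity of the square root, applied pointwise to the integrand of $E$). Where you genuinely diverge is the endgame. The paper reduces the proposition to the inequality $j\geqslant mE\bigl(\sqrt{m^2-k^2}/m\bigr)$ and derives it from a chain of floor estimates; but with formula~(\ref{j}) as printed, that sufficient condition in fact fails for the Clifford torus $(m,k)=(1,1)$, where $j=1$ while $mE(0)=\pi/2$ --- the paper's chain silently works with $\bigl[\sqrt{(m^2+k^2)/2}\bigr]$ instead of $\bigl[\sqrt{m^2+k^2}/2\bigr]$. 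Your route --- $\Lambda_j(\tau_{m,k})<8\pi(m+k)\leqslant 8\pi(j-1)$ whenever $m^2+k^2\geqslant 4$, together with the explicit verification $4\pi^2<8\pi^2/\sqrt{3}$ for the single remaining pair $(1,1)$ --- avoids this issue and correctly isolates the one case where the crude bound does not suffice. So your argument is, if anything, more careful than the paper's at the only delicate point; everything else is the same computation in a different normalization.
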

\begin{proof}
It is sufficient to obtain the inequality
\begin{equation}
\label{LawsonIneq}
j \geqslant mE\left(\frac{\sqrt{m^2-k^2}}{m}\right).
\end{equation}

Let us remark that the function
$$
\varphi(x) = 1 + x - E(\sqrt{1-x^2})
$$
is positive on the interval $[0,1]$. Indeed, 
\begin{equation*}
\begin{split}
E(x) =& \int_0^{\pi/2}\sqrt{1 - x^2\sin^2\psi}\,d\psi\leqslant\int_0^{\pi/2}(\sqrt{1 - \sin^2\psi} + \sqrt{(1-x^2)\sin^2\psi})\,
d\psi\\ =& 1 + \sqrt{1-x^2}.
\end{split}
\end{equation*}

Let us divide both sides of inequality (\ref{LawsonIneq}) by $m$ and denote by $x$ the ratio $\dfrac{k}{m}\in [0,1]$. 
Since 
$$
\left[\sqrt{\frac{m^2+k^2}{2}}\right]\geqslant \left[\frac{m+k}{2}\right]\geqslant\left[\frac{m+1}{2}\right] > \frac{m}{2},
$$
one has that inequality (\ref{LawsonIneq}) follows from the positivity of $\varphi(x)$.
\end{proof} 
\section{Bipolar surfaces to the Lawson surfaces} Let $I\colon N \looparrowright \mathbb{S}^3$ be a minimal immersion. 
\label{BipLawsonEstimate}
A Gauss map $I^*\colon N \to\mathbb{S}^3$ is defined pointwise as
the image of the unit normal in $\mathbb{S}^3$ translated to the origin in $\mathbb{R}^4$. Then the exterior product $\tilde I = I\wedge I^*$ is an immersion  
of $N$ in $\mathbb{S}^5\subset\mathbb{R}^6$. Lawson proved in the paper~\cite{Lawson} that this immersion is minimal. The image $\tilde I(N)$ is 
called a bipolar surface to $N$.

Let us denote by $\tilde \tau_{m,k}$ the bipolar surface to the surface $\tau_{m,k}$. Lapointe proved in the paper~\cite{Lapointe} that
\begin{itemize}
\item if $mk\equiv 0\,\,(\mathrm{mod}\, 2)$ then $\tilde \tau_{m,k}$ is a torus carrying the extremal metric for the 
functional $\Lambda_{4m-2}(\mathbb{T}^2,g)$ and 
$$
\Lambda_{4m-2}(\tilde\tau_{m,k}) = 16\pi mE\left(\dfrac{\sqrt{m^2-k^2}}{m}\right);
$$
\item if $mk\equiv 1\,\,(\mathrm{mod}\, 4)$ then $\tilde \tau_{m,k}$ is a torus carrying the extremal metric for the
functional $\Lambda_{2m-2}(\mathbb{T}^2,g)$ and 
$$
\Lambda_{2m-2}(\tilde\tau_{m,k}) = 8\pi mE\left(\dfrac{\sqrt{m^2-k^2}}{m}\right);
$$
\item if $mk\equiv 3\,\,(\mathrm{mod}\, 4)$ then $\tilde \tau_{m,k}$ is a Klein bottle carrying the extremal metric for the
functional $\Lambda_{m-2}(\mathbb{K},g)$ and 
$$
\Lambda_{m-2}(\tilde\tau_{m,k}) = 4\pi mE\left(\dfrac{\sqrt{m^2-k^2}}{m}\right).
$$
\end{itemize}

\begin{proposition}
If $mk\equiv 1 \,\,(\mathrm{mod}\, 4)$ then the following inequality holds
$$
\Lambda_{2m-2}(\tilde\tau_{m,k})<8\pi\left(2m-3 + \frac{\pi}{\sqrt{3}}\right).
$$
If $mk\equiv 0 \,\,(\mathrm{mod}\, 2)$ then the following inequality holds 
$$
\Lambda_{4m-2}(\tilde\tau_{m,k})<8\pi\left(4m-3 + \frac{\pi}{\sqrt{3}}\right).
$$
If $mk\equiv 3\,\,(\mathrm{mod}\, 4)$ and $\{m,k\}\ne\{3,1\}$ then the following inequality holds
$$
8\pi(m-3) + 12\pi E\left(\frac{2\sqrt{2}}{3}\right) > \Lambda_{m-2}(\tilde\tau_{m,k}).
$$
\label{BipLawsonProp}
\end{proposition}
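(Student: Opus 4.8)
The plan is to reduce all three inequalities to elementary estimates on the single quantity $mE(\sqrt{m^2-k^2}/m)$, since up to the stated numerical prefactors this is the common value of all three functionals computed by Lapointe. Writing $x=k/m\in(0,1)$ and using $\sqrt{m^2-k^2}/m=\sqrt{1-x^2}$, the main tool will be the estimate already established inside the proof of Proposition~\ref{LawsonProp}, namely that $\varphi(x)=1+x-E(\sqrt{1-x^2})$ is positive on $[0,1]$. Multiplying by $m$ yields the crude linear bound
\[
mE\left(\frac{\sqrt{m^2-k^2}}{m}\right)<m+k ,
\]
which is strict for $x\in(0,1)$. The whole proposition then becomes a competition between this linear bound and the three right-hand sides, controlled entirely by the parity of $m,k$ and by the size of $m-k$.

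For the first two cases the crude bound already suffices. If $mk\equiv1\pmod4$ then $m,k$ are both odd and coprime, so $m>k$ forces $m\geqslant k+2$, i.e. $k\leqslant m-2$; hence $\Lambda_{2m-2}=8\pi mE\leqslant 8\pi(m+k)\leqslant 8\pi(2m-2)$, which is strictly below $8\pi(2m-3+\pi/\sqrt3)$ because $-2<-3+\pi/\sqrt3$, equivalently $1<\pi/\sqrt3$. If $mk\equiv0\pmod2$ then $m>k$ gives only $k\leqslant m-1$, but this is enough: $\Lambda_{4m-2}=16\pi mE\leqslant 16\pi(m+k)\leqslant 16\pi(2m-1)$, which is strictly below $8\pi(4m-3+\pi/\sqrt3)$ by the same inequality $1<\pi/\sqrt3$. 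Strictness in both cases comes from the strict positivity of $\varphi$.

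The third case $mk\equiv3\pmod4$ is where the real work lies. Here $m,k$ are both odd, so $m-k$ is even, and after dividing by $4\pi$ the goal reads $mE(\sqrt{1-x^2})<2m-6+3E(2\sqrt2/3)$. When $m-k\geqslant4$ the crude bound closes the estimate, since it reduces to $m-k>6-3E(2\sqrt2/3)$ and $E(2\sqrt2/3)>E(1)=1$ makes the right-hand side smaller than $3$. The difficulty is the remaining family $m-k=2$, i.e. $k=m-2$: there $m+k=2m-2$ already exceeds $2m-6+3E(2\sqrt2/3)$, so the linear bound fails and must be replaced by something sharper. This is the main obstacle.

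To handle the family $m-k=2$ I would discard the linear bound in favour of the trivial inequality $E(\sqrt{1-x^2})<\pi/2$, valid because the modulus is positive; this gives $mE<m\pi/2$, and the desired estimate follows from $m(2-\pi/2)>6-3E(2\sqrt2/3)$. Inserting the tabulated value of $E(2\sqrt2/3)$ from~\cite{Friedman}, this holds for all $m\geqslant7$, leaving only the two small members $m=3$ and $m=5$ (with $k=m-2$). The pair $\{3,1\}$ is exactly the excluded case, where the proposition degenerates to the equality $8\pi\cdot0+12\pi E(2\sqrt2/3)=\Lambda_1(\tilde\tau_{3,1})$, and indeed $\tilde\tau_{3,1}$ is the maximal metric. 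The pair $\{5,3\}$ is then dispatched by the single numerical comparison $5E(4/5)<4+3E(2\sqrt2/3)$, read directly from the tables of elliptic integrals. Thus the whole proposition rests on the uniform linear bound everywhere except the thin subfamily $m-k=2$ of the Klein-bottle case, where the $\pi/2$ bound together with one tabulated base case finishes the argument.
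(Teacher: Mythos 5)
Your handling of the second and third inequalities is correct and essentially coincides with the paper's proof: the paper likewise reduces everything to the quantity $mE\left(\sqrt{m^2-k^2}/m\right)$, bounds $E$ by $\pi/2$, arrives at the conditions $\pi m\leqslant 4m-3+\pi/\sqrt{3}$ (true for $m\geqslant 2$) and $\left(2-\pi/2\right)m>6-3E\left(2\sqrt{2}/3\right)$ (true for $m\geqslant 7$), and then checks the pair $\{5,3\}$ from the tables of elliptic integrals, exactly as you do. Your additional use of the linear bound $mE\left(\sqrt{1-x^2}\right)<m+k$, inherited from the positivity of $\varphi$ in Proposition~\ref{LawsonProp}, is a mild refinement the paper does not need here; it lets you dispose of the subfamily $m-k\geqslant 4$ without invoking $E<\pi/2$, but it does not change the structure of the argument.

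There is, however, a genuine gap in your first case. You assert that $mk\equiv 1\pmod 4$ with $m,k$ odd and coprime "forces $m\geqslant k+2$", but the Lawson family only requires $m\geqslant k\geqslant 1$, and the pair $m=k=1$ satisfies $mk\equiv 1\pmod 4$. For $\{1,1\}$ your chain $mE\leqslant m+k\leqslant 2m-2$ collapses ($m+k=2$ while $2m-2=0$), and indeed the target inequality $mE\left(\sqrt{m^2-k^2}/m\right)<2m-2$ is simply false there, since the left-hand side equals $E(0)=\pi/2>0$. No version of your general estimate can absorb this case; the paper handles it separately by observing that $\tilde\tau_{1,1}$ is the Clifford torus (so the index $2m-2=0$ is degenerate and the relevant quantity is $\Lambda_1(\tilde\tau_{1,1})=4\pi^2$). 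You need to add this one separate sentence for $\{1,1\}$; the rest of your argument is sound.
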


\begin{proof}
In order to prove the first inequality it is sufficient to prove that
\begin{equation}
\label{dot}
mE\left(\frac{\sqrt{m^2-k^2}}{m}\right)\leqslant (2m-2).
\end{equation}
It is well-known that $E(\tilde k)\leqslant \pi/2$ for $\tilde k\in [0,1]$. This implies that
it is sufficient to prove that 
$$
\pi m\leqslant 4m-4.
$$
This inequality holds for $m\geqslant 5$. The statement for $\tilde \tau_{1,1}$ follows from the fact that $\tilde\tau_{1,1}$ is a Clifford torus and $\Lambda_1(\tau_{1,1}) = 4\pi^2$. 

In the same way, in order to prove the second inequality in Proposition~\ref{BipLawsonProp} it is sufficient to prove that 
$$
\pi m\leqslant 4m-3 + \frac{\pi}{\sqrt{3}}.
$$
This inequality holds for $m\geqslant 2$. 

The third inequality is equivalent the following one 
$$
2(m-3) + 3E\left(\frac{2\sqrt{2}}{3}\right) > m E\left(\frac{\sqrt{m^2-k^2}}{m}\right).
$$
Since $E(\tilde k) < \pi/2$ it is sufficient to prove that
$$
\left(2-\frac{\pi}{2}\right)m > 6 - 3E\left(\frac{2\sqrt{2}}{3}\right).
$$
This inequality holds for $m\geqslant 7$. For the exceptional case $\{m,k\} = \{5,3\}$ one verifies the third inequality explicitly using the tables of elliptic integrals in the book~\cite{Friedman}.  
\end{proof}
\section{Bipolar surfaces to Otsuki tori}
In the paper~\cite{Karpukhin} the following proposition was proved.
\label{BipOtsukiEstimate}
\begin{proposition} The bipolar surface $\tilde O_{p/q}$ to an Otsuki torus $O_{p/q}$ is a torus. 

If $q$ is odd then the metric on bipolar Otsuki torus $\tilde O_{p/q}$ is extremal for the functional $\Lambda_{2q+4p-2}(\mathbb{T}^2,g)$ and $\Lambda_{2q+4p-2}(\tilde O_{p/q}) < 4\sqrt{2}q\pi^2$. 

If $q$ is even then the metric on bipolar Otsuki torus $\tilde O_{p/q}$ is extremal for the functional $\Lambda_{q+2p-2}(\mathbb{T}^2,g)$ and  $\Lambda_{q+2p-2}(\tilde O_{p/q}) < 2\sqrt{2}q\pi^2$.\label{bipOtsuki}
\end{proposition}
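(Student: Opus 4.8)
The plan is to reduce both inequalities to the sharp bound $\Phi(a)<\pi/(2\sqrt2)$ of Proposition~\ref{PhiProp}, after writing $\Lambda(\tilde O_{p/q})$ explicitly in terms of $\Phi(a)$. The bridge is the bipolar construction together with the value $\Lambda_{2p-1}(O_{p/q})=8\pi q\,\Phi(a)$ from Proposition~\ref{valueOtsuki}, where $a$ is fixed by $\Omega(a)=p\pi/q$.

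First I would record the effect of the bipolar map on the metric. For a minimal immersion $I\colon N\looparrowright\mathbb{S}^3$ with principal curvatures $\pm\lambda$, expanding $d\tilde I=dI\wedge I^{*}+I\wedge dI^{*}$ in $\Lambda^2\mathbb{R}^4\cong\mathbb{R}^6$ shows that $\tilde I=I\wedge I^{*}$ induces the conformal metric $(1+\lambda^2)g=(2-K)g$, the last equality being the Gauss equation $K=1-\lambda^2$. Since $\tilde O_{p/q}$ is a torus, Gauss--Bonnet gives $\int K\,dA=0$, so the immersed area of the bipolar surface equals $2\,\area(O_{p/q})$. As $\tilde O_{p/q}\looparrowright\mathbb{S}^5$ is minimal, Takahashi's theorem makes the ambient coordinates eigenfunctions with eigenvalue $2$, so at the relevant index $\Lambda(\tilde O_{p/q})=2\,\area(\tilde O_{p/q})$.

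Next I would invoke the explicit description of~\cite{Karpukhin}: the bipolar map is one-to-one for $q$ odd and two-to-one for $q$ even, with $N(2)$ equal to $2q+4p-2$ and $q+2p-2$ respectively. Dividing the immersed area by the covering degree and combining with the previous step gives $\Lambda_{2q+4p-2}(\tilde O_{p/q})=16\pi q\,\Phi(a)=2\Lambda_{2p-1}(O_{p/q})$ for $q$ odd and $\Lambda_{q+2p-2}(\tilde O_{p/q})=8\pi q\,\Phi(a)=\Lambda_{2p-1}(O_{p/q})$ for $q$ even. Both claimed inequalities then follow at once from Proposition~\ref{PhiProp}: for $a\in(0,\pi/4)$ one has $\Phi(a)<\Phi(\pi/4)=\pi/(2\sqrt2)$, so $16\pi q\,\Phi(a)<4\sqrt2\,\pi^2 q$ and $8\pi q\,\Phi(a)<2\sqrt2\,\pi^2 q$. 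Note that the cruder bound of Corollary~\ref{ineqOtsuki} falls short by a factor of two for $q$ odd, so it is essential to use the sharp form $\Phi(a)<\pi/(2\sqrt2)$ rather than the constant $4\sqrt2\pi^2q$.

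The one substantive point is the covering-degree dichotomy, which is exactly where the parity of $q$ enters. One must determine how the nontrivial symmetries of the $SO(2)$-invariant torus $O_{p/q}$ act --- in particular the antipodal map of $\mathbb{S}^3$, under which $I\wedge I^{*}$ is automatically invariant since $(-I)\wedge(-I^{*})=I\wedge I^{*}$ --- and verify that such a symmetry acts nontrivially on $O_{p/q}$ precisely when $q$ is even, forcing the two-to-one identification in that case and none for $q$ odd. Once this count is in hand, the conformal factor $2-K$, Takahashi's eigenvalue, and the elliptic-integral estimate of Proposition~\ref{PhiProp} combine with no further difficulty.
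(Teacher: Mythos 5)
First, a point of reference: the paper itself gives no proof of this proposition --- it is quoted verbatim from~\cite{Karpukhin} --- so there is no in-paper argument to compare yours against. Judged on its own terms, your derivation of the two \emph{inequalities} is correct and rather clean: the bipolar immersion induces the conformal metric $(2-K)g$, Gauss--Bonnet kills $\int K\,dA$ on a surface of Euler characteristic zero, Takahashi's theorem gives $\lambda_{N(2)}=2$, and Proposition~\ref{valueOtsuki} gives $\area(O_{p/q})=4\pi q\,\Phi(a)$, so the relevant functional equals $16\pi q\,\Phi(a)$ (degree one) or $8\pi q\,\Phi(a)$ (degree two). The bound $\Phi(a)<\pi/(2\sqrt2)$ then yields exactly the stated estimates; you are right that Corollary~\ref{ineqOtsuki} falls short by a factor of two in the odd case. (Minor point: Proposition~\ref{PhiProp} as stated only asserts that $\Phi$ is non-decreasing, so you should note that $\Phi'>0$ on $(0,\pi/4)$ --- which does follow from the strict form of Proposition~\ref{MainLemma} for $\beta\in(0,1)$ --- to get the strict inequality you need.)

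There are, however, two genuine gaps. The essential one: naming the functional requires knowing $N(2)=2q+4p-2$ for $q$ odd and $N(2)=q+2p-2$ for $q$ even, and you obtain these indices by ``invoking the explicit description of~\cite{Karpukhin}.'' That eigenvalue count is precisely the main technical content of the proposition you are proving (it requires separating variables in the induced metric on $\tilde O_{p/q}$ and counting Sturm--Liouville eigenvalues below $2$), so citing it is circular; Theorem~\ref{th1} only guarantees extremality for \emph{some} index $N(2)$, and without the count the statement about $\Lambda_{2q+4p-2}$ versus $\Lambda_{q+2p-2}$ is unproved. The second gap is the covering-degree dichotomy. You correctly identify the antipodal involution as the mechanism --- it acts on the orbit space by $\theta\mapsto\theta+\pi$ and preserves the Otsuki geodesic exactly when $q$ is even, and $I\wedge I^{*}$ is invariant under it --- but you do not verify that this involution accounts for \emph{all} identifications of the bipolar map (i.e.\ that the map is injective for $q$ odd and exactly two-to-one for $q$ even), nor that the quotient in the even case is a torus rather than a Klein bottle, which is part of the statement. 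Both verifications are needed before the factor of two in the area, and hence the dichotomy between the bounds $4\sqrt2\,q\pi^2$ and $2\sqrt2\,q\pi^2$, can be asserted.
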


\begin{proposition}
If $q$ is even then the following inequality holds,
$$
\Lambda_{q+2p-2}(\tilde O_{p/q}) < 8\pi\left(q+2p-3+\frac{\pi}{\sqrt{3}}\right).
$$
If $q$ is odd then one has the following inequality,
$$
\Lambda_{2q+4p-2}(\tilde O_{p/q}) < 8\pi\left(2q+4p-3+\frac{\pi}{\sqrt{3}}\right).
$$
\label{BipOtsukiProp}
\end{proposition}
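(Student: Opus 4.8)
The plan is to combine the explicit upper bounds already furnished by Proposition~\ref{bipOtsuki} with the admissibility constraint $1/2<p/q<\sqrt{2}/2$ from Proposition~\ref{PenskoiProp}, reducing each claimed inequality to an elementary linear estimate in $q$. Since Proposition~\ref{bipOtsuki} provides the strict bounds $\Lambda_{q+2p-2}(\tilde O_{p/q})<2\sqrt{2}q\pi^2$ for even $q$ and $\Lambda_{2q+4p-2}(\tilde O_{p/q})<4\sqrt{2}q\pi^2$ for odd $q$, by transitivity it suffices to dominate these right-hand sides by the target quantities. That is, for even $q$ I would establish
$$
2\sqrt{2}q\pi^2 \leq 8\pi\left(q+2p-3+\frac{\pi}{\sqrt{3}}\right),
$$
and for odd $q$
$$
4\sqrt{2}q\pi^2 \leq 8\pi\left(2q+4p-3+\frac{\pi}{\sqrt{3}}\right);
$$
each then chains with the strict bound of Proposition~\ref{bipOtsuki} to give the asserted strict inequality.

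The key observation is that $1/2<p/q$ yields $2p>q$, and since $p,q$ are integers this sharpens to $2p\geq q+1$, so that in particular $8p\geq 4q$. Substituting this lower bound for the $p$-dependent terms eliminates $p$ in the favorable direction and leaves purely linear inequalities in $q$. For the even case, dividing by $2\pi$ and applying $8p\geq 4q$, it suffices to verify
$$
\sqrt{2}\pi q \leq 8q - 12 + \frac{4\pi}{\sqrt{3}},
$$
and since the slope $8-\sqrt{2}\pi$ is positive, this holds for every $q$ exceeding an explicit constant below $2$; as $q$ is even we have $q\geq 2$ and the estimate follows. The odd case is handled identically: dividing by $4\pi$ and using $8p\geq 4q$ reduces the claim to
$$
\sqrt{2}\pi q \leq 8q - 6 + \frac{2\pi}{\sqrt{3}},
$$
whose positive slope $8-\sqrt{2}\pi$ again makes it valid for all $q\geq 1$.

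I do not anticipate a genuine obstacle here, since the two reductions are linear in $q$ with positive slope, and the admissibility of $p/q$ is precisely what forces the $p$-dependence to drop out in the correct direction. The entire argument is therefore a routine numerical comparison of the constants $\sqrt{2}\pi$, $\pi/\sqrt{3}$, and the integer coefficients; unlike the Otsuki estimate of Section~\ref{OtsukiEstimate}, no appeal to the tables of elliptic integrals or to the fine monotonicity properties of $\Omega$ and $\Phi$ is required. The only point deserving a moment of care is confirming the base cases at the smallest admissible values ($q\geq 2$ for even $q$ and $q\geq 1$ for odd $q$), which is immediate from the explicit numerical values of the constants above.
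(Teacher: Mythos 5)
Your proposal is correct and follows essentially the same route as the paper: both start from the strict bounds $2\sqrt{2}q\pi^2$ (even $q$) and $4\sqrt{2}q\pi^2$ (odd $q$) of Proposition~\ref{bipOtsuki} and reduce the claim to an elementary linear comparison in $q$ using $2p>q$. The paper's intermediate chain additionally invokes $p>1$ and discards the $\pi/\sqrt{3}$ term first, but this is only a cosmetic difference from your bookkeeping.
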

\begin{proof}
If $q$ is even, then we have
$$
8\pi\left(q+2p-3+\frac{\pi}{\sqrt{3}}\right)>8\pi(q+2p-2)>12\pi q > 2\sqrt{2}\pi^2q.
$$
We used the inequalities $2p>q$ and $p>1$ in order to prove the last inequality.
In the same way, if $q$ is odd, then we have
$$
8\pi\left(2q+4p-3+\frac{\pi}{\sqrt{3}}\right)>8\pi(2q+4p-2)>24\pi q > 4\sqrt{2}\pi^2q.
$$ 
\end{proof}

Now it is easy to see that Propositions~\ref{OtsukiEstimateProp},~\ref{LawsonProp},~\ref{BipLawsonProp},~\ref{BipOtsukiProp} together with Proposition~\ref{LowerBound} imply Theorem~\ref{MainTheorem}.

\section{Clifford torus} 
Let us represent the Clifford torus as a flat torus with the square lattice with edges equal to $2\pi$. In this case 
\label{CliffordEstimate}
the Laplace-Beltrami coinsides up to a sign with the classical two-dimensional Laplace operator. 
Therefore, using the separation of variables one obtains that the eigenfunctions are 
$$
\sin nx\sin my, \quad \sin nx\cos ly,\quad \cos kx\sin my, \quad\cos kx\cos ly,
$$ 
where $n,m \in \mathbb{N}$ and $k,l\in\mathbb{Z}_{\geqslant 0}$. Then, the eigenvalues are equal to $n^2+m^2$, 
$n^2+l^2$, $k^2+m^2$, $k^2+l^2$ respectively.
\begin{proposition} For the Clifford torus the Weyl's counting function $N(\lambda)$ is equal to the number of integer points
in the open disk of radius $\sqrt{\lambda}$ with the center at the origin of ${R}^2$.
\end{proposition}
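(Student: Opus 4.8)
The plan is to set up an explicit bijection between the real eigenfunctions listed above and the integer lattice $\mathbb{Z}^2$, under which the eigenfunction attached to a point $(a,b)$ carries the eigenvalue $a^2+b^2$. First I would recall the standard fact that the products listed---taken over the stated index ranges $n,m\in\mathbb{N}$ and $k,l\in\mathbb{Z}_{\geqslant 0}$---form a complete orthogonal system in $L^2$ of the flat torus $(\mathbb{R}/2\pi\mathbb{Z})^2$; this is just the real form of the Fourier basis, and the index ranges are chosen precisely so that each basis function appears exactly once and no identically zero product is included. Consequently the multiplicity of each eigenvalue equals the number of listed functions carrying it, so that $N(\lambda)=\#\{i\mid\lambda_i<\lambda\}$ equals the total number of listed functions with eigenvalue strictly less than $\lambda$.

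Next I would define the map $\Phi\colon\mathbb{Z}^2\to\{\text{listed eigenfunctions}\}$ by
\[
\Phi(a,b)=
\begin{cases}
\cos(ax)\cos(by), & a\geqslant 0,\ b\geqslant 0,\\
\sin(-ax)\cos(by), & a<0,\ b\geqslant 0,\\
\cos(ax)\sin(-by), & a\geqslant 0,\ b<0,\\
\sin(-ax)\sin(-by), & a<0,\ b<0.
\end{cases}
\]
The four sign conditions obtained by combining $\{a\geqslant 0\}$ or $\{a<0\}$ with $\{b\geqslant 0\}$ or $\{b<0\}$ partition $\mathbb{Z}^2$, and I would check that each case lands bijectively onto one of the four listed families: the case $a,b\geqslant 0$ onto $\cos kx\cos ly$ via $(k,l)=(a,b)$; the case $a<0,\,b\geqslant 0$ onto $\sin nx\cos ly$ via $(n,l)=(-a,b)$; the case $a\geqslant 0,\,b<0$ onto $\cos kx\sin my$ via $(k,m)=(a,-b)$; and the case $a<0,\,b<0$ onto $\sin nx\sin my$ via $(n,m)=(-a,-b)$. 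In each case the required index ranges ($n,m\geqslant 1$ and $k,l\geqslant 0$) match exactly the ranges of $(-a)$, $(-b)$, $a$, $b$ permitted by the corresponding sign condition, so $\Phi$ is a bijection; moreover the eigenvalue of $\Phi(a,b)$ is $|a|^2+|b|^2=a^2+b^2$ in every case.

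Finally, since $\Phi$ preserves the assignment of eigenvalues, the number of listed eigenfunctions with eigenvalue $<\lambda$ equals $\#\{(a,b)\in\mathbb{Z}^2\mid a^2+b^2<\lambda\}$, which is exactly the number of integer points in the open disk of radius $\sqrt{\lambda}$ centred at the origin; combined with the first paragraph this yields the claimed formula for $N(\lambda)$. The only delicate point---and the one I would verify most carefully---is the bookkeeping on the coordinate axes and at the origin: one must confirm that the half-open sign conditions send the positive and negative axes and the origin to the correct single family, with no overlap or omission (for instance the origin maps to the constant function $\equiv 1$ with eigenvalue $0$, matching $\lambda_0$). Everything else is routine Fourier analysis.
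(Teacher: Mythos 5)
Your proof is correct and follows essentially the same route as the paper: an explicit bijection between the listed real Fourier eigenfunctions and $\mathbb{Z}^2$ under which the eigenvalue equals the squared distance to the origin (your sign convention is the mirror image of the paper's map $\nu$, which is immaterial). The paper leaves the completeness of the basis and the bookkeeping on the axes implicit, whereas you spell them out; nothing of substance differs.
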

\begin{proof} Let us introduce an one-to-one correspondence $\nu$ between eigenfunctions and integer points in
$\mathbb{R}^2$. We set
$$
\left\{
   \begin{array}{lcl}
	\nu(\sin nx\sin my) &=& (n,m),\\
	\nu(\sin nx\cos ly) &=& (n,-l),\\
	\nu(\cos kx\sin my) &=& (-k,m),\\
	\nu(\cos kx\cos ly) &=& (-k,-l).\\
   \end{array}
\right.
$$
   
Let us also remark that the eigenvalue of the function $f$ is equal to the squared distance between $(0,0)$ and $\nu(f)$.
This observation completes the proof.
\end{proof}
\subsection{Proof of Proposition~\ref{CliffordTheorem}.}
It is easy to check that the set of functions 
$$
(\sin kx,\cos kx,\sin ky,\cos ky)
$$ 
form an isometrical immersion of Clifford torus in the unit sphere. The same is true for the set 
$$
(\sin kx\sin ky,\sin kx\cos ky, \cos kx\sin ky, \cos kx\cos ky)
$$ 
and the set
\begin{equation*}
\begin{split}
(\sin k&x\sin ly,\sin kx\cos ly, \cos kx\sin ly, \cos kx\cos ly,\\
&\sin lx\sin ky,\sin lx\cos ky, \cos lx\sin ky, \cos lx\cos ky),
\end{split}
\end{equation*}
where $k\ne l$. Therefore, according to Theorem~\ref{th1}, the metric on the Clifford torus is extremal for the functionals
$\Lambda_{N(r^2)}(\mathbb{T}^2,g)$, where $r^2 = n^2+m^2$ with $n,m\in\mathbb{Z}$, and 
$\Lambda_{N(r^2)}(\mathbb{T}_{Cl}) = 4\pi^2 r^2$. 

Let $B_r$ be a disc of radius $r$. Then one has a simple estimate
$$
N(r^2)\geqslant \mathrm{Area}\left(B_{r-\sqrt{2}/2}\right)
=\pi\left(r-\frac{\sqrt{2}}{2}\right)^2.
$$ 

So, it is sufficient to prove that 
$$
2\left(r-\frac{\sqrt{2}}{2}\right)^2 > r^2
$$
and this inequality holds for $r^2\geqslant 6$. And for $r^2<6$ holds the inequality $8\pi N(r^2) > 4\pi r^2$. This inequality can 
be obtained by the direct enumeration of all possible values of $r^2$. This completes the proof of Proposition~\ref{CliffordTheorem}.
\subsection*{Acknowledgements} The author thanks A.V. Penskoi for the statement of this problem, fruitful discussions and invaluable help in the preparation of the manuscript.

The research of the author was partially supported by Dobrushin Fellowship and by Simons-IUM Fellowship.

\textsc{{Department of Geometry and Topology, Faculty of Mechanics and Mathematics, Moscow State University, Leninskie Gory, GSP-1, 
119991, Moscow, Russia}}

\smallskip

\textit{{and}}

\smallskip

\textsc{{Independent University of Moscow, Bolshoy Vlasyevskiy pereulok 11, 119002, Moscow, Russia}}

\smallskip

\textit{E-mail address:} \texttt{karpukhin@mccme.ru}
\end{document}